\newtheorem{thrm}{Theorem}
\newtheorem{lemma}{Lemma}[section]
\newtheorem{defn}[lemma]{Definition}
\newtheorem{cor}[lemma]{Corollary}
\newtheorem{prop}[lemma]{Proposition}
\newtheorem*{eg}{Example}
\title{On the first order theory of plactic monoids}
\author{Daniel Turaev}
\date{}
\begin{document}

\maketitle
\begin{abstract}
    This paper proves that a plactic monoid of any finite rank will have decidable first order theory. This resolves other open decidability problems about the finite rank plactic monoids, such as the Diophantine problem and identity checking. This is achieved by interpreting a plactic monoid of arbitrary rank in Presburger arithmetic, which is known to have decidable first order theory. We also prove that the interpretation of the plactic monoids into Presburger Arithmetic is in fact a bi-interpretation, hence any two plactic monoids of finite rank are bi-interpretable with one another. The algorithm generating the interpretations is uniform, which  answers positively the decidability of the Diophantine problem for the infinite rank plactic monoid. 
\end{abstract}
\tableofcontents
\section{Introduction}

The plactic monoid has its origin in the work of Knuth \cite{knuth1970permutations}, which based itself on the algorithm developed by Schensted \cite{schensted61}. First studied in depth by Lascoux and Sch\"utzenberger \cite{plaxique81, schutzenberger1977correspondance, schutzenberger1971construction}, its combinatorial properties were applied to the theory of symmetric polynomials to prove the Littlewood-Richardson rule. Due to its origins as a monoid of Young tableaux, it has proved useful in various aspects of geometry and representation theory \cite{fulton_1996}. More recently, it has found application in Kashiwara's crystal basis theory \cite{kashiwaracrstals}, with analogous plactic monoids being defined for different root systems associated to crystal bases \cite{MR2269126, LECOUVEYplcn,LECOUVEYplgn,LECOUVEYplbnpldn,littelmann1995plactic}, and used to study Kostka-Foulkes polynomials \cite{LECOUVEYKostkaFoulkes}. Related, plactic-like monoids have also been defined \cite{stylic, plactic1994gerard, Sylvesterdefn, NOVELLI2000315}, and used to study the combinatorics and growth properties of the plactic monoid, which itself has some interesting combinatorial structure. Cain, Gray, and Malheiro \cite{CainGray15} have shown that the plactic monoids are biautomatic, as are related crystal monoids \cite{caingraycrystal}, and related plactic-like monoids such as the Chinese, Hypoplactic, and Sylvester monoids \cite{Cain_2015}. 

Schensted's multiplication algorithm can be used to decide the word problem for the plactic monoid. It was shown in 1981 that the plactic monoid has decidable conjugacy problem \cite{plaxique81}. A classic generalisation of both the word and conjugacy problems is the Diophantine problem, which has received much attention for free groups \cite{kharlampovich1998irreducible, makanin1982equations, razborov1985systems, SelaGroups}, where Makanin-Razborov diagrams were used independently by Sela \cite{SelaGroups} and Kharlampovich and Myasnikov \cite{kharlampovich1998tarski} to solve the Tarski problems on the first order theory of free groups\footnote{For a survey of these results, see \cite{fine2015tarski}}. The Diophantine problem has also been studied for free monoids \cite{makanin1977problem, SelaMonoids} and is gaining attention in the study of other monoids \cite{GarretaGray2021, CFmonoid}. In the monoid setting, it asks for an algorithm for deciding whether a given system of equations has a solution in a given monoid. 

An active area of research  is the question of checking identities in the plactic monoids and their monoid algebras. Progress has been made in the rank 3 case \cite{kubat2015identities, kubat2012plactic}, and the plactic monoid, bicyclic monoid, and related plactic-like monoids have been shown to admit faithful representations in terms of matrices over the tropical semiring \cite{cain2017note, CAIN2022819, Daviaud_2018, johnson2021tropical}. This implies that every plactic monoid of finite rank satisfies a nontrivial semigroup identity. There is a natural decision problem underpinning this field of study -- is it decidable whether a given identity is satisfied by a plactic monoid. 

In this paper, we show that the plactic monoid of every finite rank has decidable first order theory. This result is a significant generalisation of both of the above ideas. Both identities and Diophantine equations are expressible as first order sentences, thus yielding positive results for both the Diophantine problem and the problem of identity checking. It is not equivalent to these results -- the free semigroup has undecidable theory despite having decidable Diophantine problem \cite{Quine1946-QUICAA}. Nor does this result follow from the multi-homogeneity of the plactic monoids, as there are multi-homogeneous monoids with undecidable theory, and even undecidable conjugacy problem \cite{CainSylvester}. 

The argument presented below is by constructing an interpretation of a plactic monoid in Presburger arithmetic, and could open the door to studying the theories of plactic-like classes of monoid. It is known that all groups interpretable in Presburger arithmetic are abelian-by-finite \cite{ONSHUUS2020102795}. This result may also be a starting point for classifying all monoids interpretable in Presburger arithmetic. 

In preparing this paper for publication, the author was made aware in private communication that Alan Cain and Tara Brough have independently constructed a proof that the plactic monoids are interpretable in Presburger arithmetic, which will appear in a forthcoming paper of theirs. This would overlap with results in sections \ref{P2basecase} and \ref{Pngeneralcase}, but not with any results from sections \ref{sectionBiinterpret} and \ref{sectionInfinite}. To the author's knowledge, the method used to prove this result is different than the one we present.

Sections \ref{P2basecase} and \ref{Pngeneralcase} deal with the construction of an interpretation of a plactic monoid in Presburger arithmetic. In section \ref{sectionBiinterpret} we show that this interpretation is in fact a bi-interpretation, and that certain submonoids of the plactic monoid are definable. In section \ref{sectionInfinite}, we discuss the Diophantine problem in certain infinitely generated monoids generalising the plactic monoid.

\subsection*{Notation and conventions}
Write $A$ for a totally-ordered alphabet on $n$ letters, which will usually be the set $\{1,\dots,n\}$. The free monoid on an alphabet $A$ will be written with a Kleene star $A^*$, and will have identity $\varepsilon$, the empty word. Given a set of generators $A$ and relations $R\subset A^*\times A^*$, the monoid presentation denoted by $\langle A|R\rangle$ will be the quotient of $A^*$ by the congruence generated by $R$. The set $\mathbb{N}$ of natural numbers will contain 0.

\section{Background}
\subsection{Rewriting systems}

A string rewriting system (henceforth rewriting system) for $A^*$ is a set $R~\subset ~A^*\times~A^*$ of elements $(\ell,r)$, usually written $\ell\to r$, called \emph{rewrite rules}. See the book \cite{book1993string} for a more detailed introduction.

For two elements $u,v\in A^*$, write $u\to_R v$ if $u = x\ell z$, $v = xrz$, and $(\ell,r)\in R$. The transitive and reflexive closure of $\to_R$, written $\to_R^*$, is called the reduction relation of $R$. The symmetric closure of $\to_R^*$ is a semigroup congruence. The monoid obtained by taking the quotient of $A^*$ by this congruence is the monoid presented by $\langle A|R\rangle$. Thus every presentation $\langle A|R\rangle$ also corresponds to a rewriting system, which is written  $(A, R)$. 

A rewriting system is called \emph{Noetherian} if it has no infinite descending chain. That is, there is no sequence $u_1,u_2,\dots\ \in A^*$ such that $u_i\to_R u_{i+1}$ for all $i\in\mathbb{N}$.  A rewriting system is called \emph{confluent} if it has the property that, whenever $u\in A^*$ is such that $u\to_R^* u'$ and $u\to_R^* u''$, there exists a $v$ such that $u'\to_R^* v$ and $u''\to_R^*v$. We call a confluent Noetherian rewriting system \emph{complete}. 

Call $u\in (A,R)$ a \emph{reduced word} if there is no subword $\ell$ of $u$ that forms the left hand side of a rewrite rule in $R$. By theorem 1.1.12 of \cite{book1993string}, if $(A,R)$ is a complete rewriting system, then for every $u\in A^*$ there is a \emph{unique, reduced} $v\in A^*$ such that $u\to_R^* v$. This $v$ is called a \emph{normal form} for $u$, and forms a cross-section of the monoid $\langle A|R\rangle$, in the sense that every element of the monoid is equal to exactly one reduced word. We may therefore identify a monoid admitting a complete rewriting system with its set of normal forms, and the multiplication being concatenation followed by reducing to normal form. 
\subsection{The plactic monoid}

We follow the French conventions of Young diagrams having longer rows underneath shorter ones. 

\begin{defn}

 A \emph{Semistandard Young Tableau} (henceforth simply tableau) is a Young diagram with labelled boxes, with labels satisfying the following conditions
    \begin{itemize}
        \item each row weakly increases left to right
        \item each column strongly decreases top to bottom
    \end{itemize}
\end{defn}

\begin{eg} \begin{ytableau}
      3&4\\
      2&3&3\\
    1&1&2&4&4\\
    \end{ytableau} is a tableau. \begin{ytableau}
     4&5\\ 6 & 1\\ 1&2&3
    \end{ytableau} is not a tableau.
\end{eg}

Let $t$ be a tableau with labels taken from $A$. We associate to $t$ a \emph{row reading} in $A^*$. Suppose $t$ is a tableau of $m$ rows, labelled top to bottom as $r_1,\dots, r_m$. The labels of the boxes in each row are an increasing sequence, which can be viewed as a word $r_i\in A^*$. The row reading of $t$ is then $w = r_1r_2\dots r_m \in A^*$. 

We similarly associate a \emph{column reading} to $t$. Denote the columns of $t$ from left to right by $c_1,\dots, c_m$. Each such column corresponds to a strictly decreasing sequence $c_i\in A^*$. The column reading of  $t$ is then $w = c_1\dots c_m \in A^*$. 

\begin{eg}
The tableau $t = $ \begin{ytableau}
        3 \\2 & 3\\1 & 1 & 2 & 2 & 2
    \end{ytableau}  has row reading $$32311222 = 3\ 23\ 11222$$ and column reading $$32131222 = 321\ 31\ 2\ 2\ 2$$
\end{eg}

We now describe Schensted's algorithm. Consider $A = \{1,\dots,n\}$ the totally ordered alphabet, and $w\in A^*$. We may view $w$ as a finite sequence of numbers. Schensted's algorithm is used to study the longest increasing and decreasing subsequences of $w$. The algorithm associates a tableau to $w$ with the property that the number of columns of $w$ is the length of the longest \emph{increasing} sequence, and the number of rows is the length of the longest \emph{strictly decreasing} sequence. See \cite{schensted61} or chapter 5 of \cite{Loth} for more details on this combinatorial structure.

\begin{defn}[Schensted's algorithm]
    We define $P:A^*\to A^*$ to be the map sending a word $w$ to the row reading of a tableau recursively as follows:

Firstly, $P(\varepsilon) = \varepsilon$. Then suppose $w = x_1\dots x_\ell\in A^*$ and $P(x_1\dots x_{\ell-1}) = r_1\dots r_m$, for some rows $r_i$ that form the row reading of a tableau. Then we have:\begin{enumerate}
    \item If $r_mx_\ell$ is a row, then we set $P(r_1\dots r_mx_\ell) = r_1\dots r_mx_\ell$
    \item If not, then we can write $r_m = r_\alpha y r_\beta$, with $y$ being the leftmost letter such that $x_\ell<y$. Such a $y$ must exist, since otherwise $r_mx_\ell$ would be a row. But then $r_\alpha x_\ell r_\beta$ will be a row. So we set $$P(r_1\dots r_mx_\ell) = P(r_1\dots r_{m-1}y) r_\alpha x_\ell r_\beta.$$ 
\end{enumerate}\end{defn}
We call the process in point (2) `bumping the letter $y$'. If $t$ has row reading $r_1\dots r_m$ and column reading $c_1\dots c_k$, then it is straightforward to show that $$P(r_1\dots r_m) = P(c_1\dots c_k) = r_1\dots r_m.$$

Using this algorithm, we may define a monoid of tableaux, which is our object of interest. 

\begin{defn}[The plactic monoid]
    The relation $\sim$ on $A^*$ given by $$u\sim v \iff P(u) = P(v)$$ is a semigroup congruence, and the monoid $A^*/\sim$ with multiplication given by $u\cdot v = P(uv)$ is called the \emph{plactic monoid of rank $n$}. Denote this monoid by $P_n$.
\end{defn}

Knuth \cite{knuth1970permutations} exhibited a set of defining relations $K$ for the plactic monoids of the form $xzy = zxy$ and $yxz=yzx$ for $x<y<z,\ x,y,z \in A$ and $xyx = yxx$ and $yyx = yxy$ for $x<y,\ x,y\in A$. That is, we have 
$$ K = \{xzy = zxy,\ x\leq y <z \}\cup\{ yxz = yzx, \ x<y\leq z\}$$

with $P_n = \langle A|K\rangle$. For each finite rank, it follows that $P_n$ will be finitely presented. Note that the Knuth relations are equivalent to running Schensted's algorithm on all words of length 3.

It was shown by Cain, Gray, and Malheiro in \cite{CainGray15} that the plactic monoid admits a finite complete rewriting system, which we describe here.

We consider two columns $\alpha, \beta$  as words in $A^*$. We say that $\alpha$ and $\beta$ are \emph{compatible}, written $\alpha\succeq \beta$, if $\alpha\beta$ is the column reading of a tableau. Then each pair $\alpha,\beta$ with $\alpha\nsucceq\beta$ yields a rewrite rule. Consider the tableau associated to $P(\alpha\beta)$. Since the number of columns in $P(\alpha\beta)$ is the  length of the longest increasing sequence, and $\alpha,\beta$ are columns, it follows that $P(\alpha\beta)$ will be a tableau with at most two columns.  Therefore this tableau will have column reading $\gamma\delta$, for some columns $\gamma,\delta$ with $\gamma\succeq\delta$, and potentially $\delta = \varepsilon$.

Now consider $\mathcal{C} =\{c_\alpha\ |\ \alpha\in A^*, \alpha\text{ is a column}\}$ to be a set of symbols corresponding to columns in $A^*$. Since $A$ is finite and columns are strictly decreasing sequences, $\mathcal{C}$ is also finite.  Then define $R$ to be the set of all rewrite rules detailed above $$R = \{c_\alpha c_\beta \to c_\gamma c_\delta|\ \alpha,\beta\in A^*\ \alpha\nsucceq\beta\}$$

It is shown in \cite{CainGray15} that
\begin{lemma}\label{rewritelemma}
   $(\mathcal{C}, R)$ is a complete rewriting system for $P_n$. 
\end{lemma}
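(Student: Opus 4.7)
The plan is to establish three things: that $\langle \mathcal{C} \mid R\rangle$ presents $P_n$, that the rewriting is Noetherian, and that it is confluent; the last two together amount to completeness. The first ingredient is essentially a bookkeeping check: the length-one columns $c_a$ ($a \in A$) are enough to surject $\mathcal{C}^*$ onto $P_n$, and each rule $c_\alpha c_\beta \to c_\gamma c_\delta$ corresponds to a plactic equality $\alpha\beta \sim \gamma\delta$, since $\gamma\delta$ is, by construction, the column reading of the tableau $P(\alpha\beta)$. Once the other two properties are in hand, the normal forms are precisely the words $c_{\mu_1} \cdots c_{\mu_\ell}$ with $\mu_i \succeq \mu_{i+1}$, which are in bijection with tableaux and hence with $P_n$, giving injectivity.

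For Noetherianity, the key observation I would prove is that every rule strictly increases the height of the leftmost column: if $\alpha \nsucceq \beta$ and $P(\alpha\beta)$ has column reading $\gamma\delta$, then $|\gamma| > |\alpha|$. By Schensted's theorem, $|\gamma|$ equals the length of the longest strictly decreasing subsequence of $\alpha\beta$, so it suffices to exhibit one of length $|\alpha| + 1$. If $|\beta| > |\alpha|$ then $\beta$ itself suffices. Otherwise incompatibility supplies some index $i$ with $\alpha_{|\alpha|-|\beta|+i} > \beta_i$; setting $j = |\alpha|-|\beta|+i$, the sequence $\alpha_1, \ldots, \alpha_j, \beta_i, \beta_{i+1}, \ldots, \beta_{|\beta|}$ is strictly decreasing of length $|\alpha| + 1$. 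Equipping $\mathcal{C}$ with an order in which taller columns are smaller, and taking shortlex on $\mathcal{C}^*$, every rewrite either drops the number of column symbols (when $\delta = \varepsilon$) or keeps it the same while replacing $c_\alpha$ by the strictly smaller $c_\gamma$; thus the shortlex value strictly decreases.

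Confluence is the main obstacle, for which I would invoke Newman's lemma and reduce to local confluence. Since every left-hand side has length two, the only non-trivial overlap is a single-symbol one, occurring for a word $c_\alpha c_\beta c_\gamma$ with $\alpha \nsucceq \beta$ and $\beta \nsucceq \gamma$, yielding two one-step reductions; disjoint overlaps trivially commute. Both descendants evidently lie in the plactic class of $\alpha\beta\gamma$, so they represent the same element of $P_n$. The hard part will be to show combinatorially that, by continuing to rewrite, each descendant reaches the same reduced word, namely the column reading of $P(\alpha\beta\gamma)$. I expect this to go through by induction along the Noetherian order already established, exploiting the associativity of Schensted insertion on the underlying $A^*$-word and a case analysis of how the three columns bump into one another. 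Newman's lemma then upgrades this to full confluence, and combining with Noetherianity yields completeness of $(\mathcal{C}, R)$ as a rewriting system for $P_n$.
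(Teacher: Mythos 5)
First, note that the paper does not actually prove this lemma: it is imported wholesale from Cain, Gray, and Malheiro \cite{CainGray15}, so there is no in-paper argument to compare yours against. Judged on its own terms, your proposal is sound in its first two parts. The presentation check is fine, and your Noetherianity argument is correct and clean: from an incompatibility you exhibit a strictly decreasing subsequence of $\alpha\beta$ of length $|\alpha|+1$, invoke Schensted's theorem to get $|\gamma|>|\alpha|$, and then descend along shortlex on $\mathcal{C}^*$ with taller columns declared smaller; since the rewritten factor sits after an unchanged prefix, every rewrite strictly decreases a well-order, so there is no infinite chain.

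The gap is confluence. ``Both descendants evidently lie in the plactic class of $\alpha\beta\gamma$'' is true but does not by itself give local confluence, and ``I expect this to go through by induction \dots and a case analysis'' is a statement of intent, not a proof: the case analysis of how three pairwise-overlapping columns resolve is exactly the hard combinatorial content, and you have not supplied it. You do not, however, need critical pairs at all, because you already hold every ingredient for the standard unique-normal-form criterion. You have shown (a) the system is Noetherian, so every word reduces to some irreducible; (b) every rule is a plactic equality, so words congruent under the congruence generated by $R$ have equal images in $P_n$; and (c) the irreducible words are exactly the sequences of pairwise compatible columns, which biject with tableaux and hence with distinct elements of $P_n$, so two distinct irreducibles are never congruent. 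Together these say each congruence class contains exactly one irreducible; then for any $u\to_R^* u'$ and $u\to_R^* u''$, reducing $u'$ and $u''$ to irreducibles must produce the same word, which is confluence outright (and simultaneously shows that $\langle\mathcal{C}\mid R\rangle$ presents $P_n$, since the induced map from classes to $P_n$ is then a bijection). Restructured this way your argument closes completely; as written, it does not.
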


It follows from this that $P_n$ admits normal forms as reduced words in $\mathcal{C}^*$. By the definition of $\succeq$, this normal form will be in the form of column readings $c_{\alpha_1}\dots c_{\alpha_m}$ with each $\alpha_i\succeq \alpha_{i+1}$. 

Note that if $\alpha = \alpha_m\dots \alpha_1$ and $\beta = \beta_n\dots \beta_1$, $\alpha_i,\beta_i\in A$, are columns appearing in the column reading of the same tableau (not necessarily adjacent) with $\alpha$ further left than $\beta$,  then $\alpha\succeq\beta$. Indeed, since $\alpha$ and $\beta$ are columns of the same tableau, then by the structure of a tableau we have that $m\geq n$. Furthermore, each pair $\alpha_i,\beta_i$ will be in the same row of the tableau, with $\alpha_i$ appearing earlier than $\beta_i$. This will imply that $\alpha_i\leq \beta_i$. But these two conditions imply that $\alpha\succeq \beta$. Thus $\succeq$ is a partial order on $\mathcal{C}$.

 We introduce  a length-decreasing-lexicographic order on $\mathcal{C}$ extending $\succeq$. For $c_\alpha, c_\beta \in \mathcal{C}$, define: \begin{equation*}
   c_\alpha\sqsubseteq c_\beta \iff \left(|\alpha|>|\beta|\right) \lor\left( |\alpha|=|\beta|\wedge \left(\exists j: \ i<j\implies \alpha_i = \beta_i \wedge \ \alpha_j<\beta_j \right) \right)
\end{equation*}
With $j$ taken as $n+1$ when $c_\alpha = c_\beta$. Note that $c_\alpha\succeq c_\beta \implies c_\alpha\sqsubseteq c_\beta$. Furthermore, this is clearly a total order. We can therefore enumerate the set $\mathcal{C}$ as $\{c_1,\dots c_k\}$, with $k = |\mathcal{C}| = 2^n-1$, such that $i\leq j\implies c_i\sqsubseteq c_j$. Then, since $c_\alpha\succeq c_\beta \implies c_\alpha\sqsubseteq c_\beta$, we have that the normal forms of $P_n$ will have the form $$c_1^{w_1}\dots c_k^{w_k}$$

with $w_i\in\mathbb{N}$ for each $i$, and for any pair $c_i, c_j$ with $i<j \land c_i\nsucceq c_j$, either $w_i = 0 $ or $w_j = 0$. Call two columns $c_i$ and $c_j$ \emph{incompatible} if $i<j \land c_i\nsucceq c_j$.

\begin{eg}
    $P_3$ has seven columns:

    \begin{ytableau}
        3\\2\\1
    \end{ytableau}, \begin{ytableau}
        2\\1
    \end{ytableau}, \begin{ytableau}
        3\\1
    \end{ytableau}, \begin{ytableau}
        3\\2
    \end{ytableau}, \begin{ytableau}
        1
    \end{ytableau}, \begin{ytableau}
        2
    \end{ytableau},  \begin{ytableau}
        3
    \end{ytableau}

listed here in length-decreasing-lexicographic order. This list corresponds to symbols $c_1,\dots, c_7 \in \mathcal{C}$. Note that $c_4$ and $c_5$ are incompatible. $P_3$ is the lowest rank plactic monoid with an incompatible pair.
\end{eg}

\begin{eg}
    In $P_3$, the element $c_1^3c_2c_4^2\in\mathcal{C}^*$ is in normal form and corresponds to the following tableau: 
    
    \ytableaushort{333,222233,111122}
\end{eg}

\subsection{Interpretations, theories, and Presburger arithmetic}

We assume familiarity with basic first order logic, and refer the reader to \cite{hodgesMT} or \cite{marker2013MT} for a more detailed introduction to model theory. We will be following the conventions of \cite{marker2013MT}

\begin{defn}
    Let $L$ be the language of first order formulas for a given signature. Then the first order theory of an $L$-structure $\mathcal{M}$ is the set of all sentences in $L$ that hold in $\mathcal{M}$.

    The question of deciding a first order theory asks for an algorithm which, given a first order sentence $\phi$,  determines whether $\phi$ is true or false in $\mathcal{M}$ in finite time. 
\end{defn}

The language of interest in this paper is the language of monoids, whose signature is $(\circ,\varepsilon)$. 
To speak of the first order theory of a given monoid, one classically allows atomic formulas of the form $u=v$ for each $u,v\in\mathcal{M}$. In the finitely generated case (with generating set $A = \{a_1,\dots,a_n\}$, say) this is equivalent to adding constants $a_1,\dots,a_n$ to the signature, and considering the first order theory with constants of $(\mathcal{M},\circ,\varepsilon,a_1,\dots,a_n)$.

In our case, we refer to the first order theory of a plactic monoid of rank $n$, which will have constants $1,\dots,n$ added to the language of monoids. Write $FOTh(P_n)$ as shorthand for the first order theory of $P_n$ with constants.

We aim in the following sections to build an interpretation of $P_n$ in Presburger arithmetic, which will allow $\varphi\in FOTh(P_n)$ to be reduced to a sentence $\Tilde{\varphi}$ of Presburger arithmetic. A \emph{reduction} of a decision problem $D_1$ to another decision problem $D_2$ is a Turing machine which, given finitely many queries to an oracle for $D_2$, will yield an algorithm for deciding $D_1$. Importantly, this means that decidability of $D_2$ will imply decidability of $D_1$, as such an oracle machine will exist and halt in finite time on each query.

Presburger arithmetic is named after Mojżesz Presburger, who in 1929 was tasked with studying the decidability of the integers under addition. In his master's thesis \cite{Presburger}, he used quantifier elimination and reasoning about arithmetic congruences to prove that the first order theory of $(\mathbb{N},0,1,+)$ is consistent, complete, and decidable. Note that we can add a comparison symbol $\leq$ to the signature of Presburger arithmetic without trouble, since $x\leq y$ is equivalent to the statement $\exists z: y = x+z$. This yields the following lemma:

\begin{lemma}\label{Presburger}
    $FOTh(\mathbb{N},0,1,+,\leq)$ is decidable.
\end{lemma}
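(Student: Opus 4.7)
The plan is to follow Presburger's original approach via quantifier elimination. The signature $(0,1,+,\leq)$ alone does not admit quantifier elimination: for instance, $\exists y\ (x = y+y)$ expresses that $x$ is even, and this is provably not equivalent to any quantifier-free formula in the base signature. So I would first expand the signature by a countable family of binary predicates $\equiv_n$, one for each integer $n \geq 2$, where $\equiv_n(x,y)$ holds iff $x$ and $y$ are congruent modulo $n$. Each such predicate is definable from $(0,1,+,\leq)$, so this expansion is conservative — a sentence in the original language holds in $\mathbb{N}$ iff it holds in the expanded structure — and it suffices to prove decidability in the expanded language.

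The main work is to show that every formula in the expanded language is equivalent over $\mathbb{N}$ to a quantifier-free formula. By the standard reduction, it is enough to eliminate a single existential quantifier from a conjunction $\exists x\,\bigwedge_i \phi_i(x,\bar y)$ where each $\phi_i$ is a literal. First I would put each literal in a normal form: a linear expression $\sum_j c_j y_j + c\,x$ compared via $=$ or $\leq$ to another such expression, together with congruence constraints $\sum_j c_j y_j + c\,x \equiv_n d$. By multiplying through by a common multiple, one can assume the coefficient of $x$ in every literal is the same integer $M$, and then substitute a fresh $z = Mx$ subject to $z \equiv_M 0$. Eliminating $\exists z$ now amounts to asking whether some integer lies strictly between a finite family of lower and upper bounds (each a linear expression in $\bar y$) while satisfying a finite system of congruence conditions. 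By the Chinese remainder theorem this joint congruence system has a single modulus $N$, and the set of admissible residues is fixed; periodicity then reduces the existence question to a finite disjunction of quantifier-free conditions on $\bar y$ involving $\leq$ and $\equiv_N$.

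The hard part will be the careful bookkeeping in this elimination step, specifically the interaction between the order literals and the congruence literals and the case split depending on whether a given side is a lower bound, an upper bound, or an equality. Once quantifier elimination has been established, any closed sentence reduces to a quantifier-free sentence built from $0$, $1$, $+$, $\leq$, and $\equiv_n$ with no free variables, which can be evaluated mechanically. This gives a decision procedure for $FOTh(\mathbb{N},0,1,+,\leq)$.
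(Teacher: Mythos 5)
Your proposal is correct and is essentially the argument the paper relies on: the paper does not prove this lemma itself but cites Presburger's 1929 quantifier-elimination proof, which proceeds exactly as you describe (conservatively expanding the signature by congruence predicates $\equiv_n$, eliminating a single existential from a conjunction of normalized literals via the coefficient-unification and residue-periodicity argument, and then mechanically evaluating the resulting quantifier-free sentence). The only caveat is that your sketch leaves the elimination bookkeeping — negated literals, the no-lower-bound case over $\mathbb{N}$, and the dependence of admissible residues on $\bar y$ — to be filled in, which you acknowledge.
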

This result will form the bedrock of the following argument. For an English translation of Presburger's work, see \cite{MR1111343} or \cite{PresburgerEng}.

For the definition of an interpretation, we proceed as in section 1.3 of \cite{marker2013MT}

\begin{defn}[Definable sets]
    Let $\mathcal{M}$ be an $L$-structure. A set $S\subseteq \mathcal{M}^n$ is called \emph{definable in $\mathcal{M}$} if there is a first order formula $\phi(x_1,\dots,x_n,y_1,\dots,y_m)\in L$ with free variables $x_1,\dots,x_n,y_1\dots,y_m$ such that there exists $(w_1,\dots,w_m)\in \mathcal{M}^m$ with the property that $\phi(x_1,\dots,x_n,w_1,\dots,w_m)$ holds if and only if $(x_1,\dots,x_n)\in S$. i.e. $S$ is the set $$\{\underline{x}\in\mathcal{M}^n|\ \mathcal{M}\models \phi(\underline{x},w_1,\dots,w_m)\}$$
\end{defn}

\begin{eg}
The set of elements that commutes with a given $m\in\mathcal{M}$ is definable by the formula $xm=mx$.  The centre of $\mathcal{M}$ is definable by the formula $$\forall m: xm = mx$$ If $\mathcal{M}$ is finitely generated by $\{m_1,\dots m_k\}$, then the formula $$xm_1=m_1x\land xm_2 = m_2x\land\dots\land xm_k=m_kx$$
defines the centre of $\mathcal{M}$. This has the property of being a \emph{positive existential} formula, which is useful in the study of Diophantine equations\footnote{See, for example, \cite{ciobanu2022group}} 
\end{eg}
\begin{defn}[Definable functions]
    A function $f:\mathcal{M}^m\to\mathcal{M}^n$ is definable in $\mathcal{M}$ if its graph is definable as a subset of $\mathcal{M}^{m+n}$.
\end{defn}
Note that the composition of definable functions is definable. 

\begin{defn}[Interpretability]\label{interpretdefn}
Let $\mathcal{M}$ be an $L_1$-structure, and $\mathcal{N}$ be an $L_2$-structure. Then we call $\mathcal{N}$ \emph{interpretable} in $\mathcal{M}$ if there exist some $n\in\mathbb{N}$, some set $S\subseteq \mathcal{M}^n$, and a bijection $\phi:S \to \mathcal{N}$ such that\begin{enumerate}
    \item $S$ is definable in $\mathcal{M}$
    \item For every $r$ in the signature of $L_2$, including the equality relation, the preimage by $\phi$ of the graph of $r$ is definable in $\mathcal{M}$
\end{enumerate}  
\end{defn}

We will use the notation $\phi^{-1}(r)$ for the preimage of the graph of $r$. Since we will only be dealing with the case of a monoid, point 2 reduces to checking the preimages of equality $\phi^{-1}(=)$ and multiplication $\phi^{-1}(\cdot)$, with the latter being the set of triples $(a,b,c)\in S^3$ such that $\phi(a)\cdot\phi(b) = \phi(c)$. 

Note that in the above definition we insisted the map $\phi$ be a bijection, as in  section 1.3 of \cite{marker2013MT}. The interpretation we will build will be a bijection. However, the most general theory of interpretations works with surjections from $S$ onto $\mathcal{N}$. See section 5 of \cite{hodgesMT} for more information.

The following result will prove fundamental, and is a consequence of theorem 5.3.2 and its remarks in \cite{hodgesMT}:
\begin{prop}\label{reductionlemma}
    Suppose $L_1$ and $L_2$ are languages, with $M_1$ and $M_2$ being $L_1$- and $L_2$-structures, respectively. Suppose $M_1$ is interpretable in $M_2$. Then the problem of deciding $FOTh(M_1)$ is reducible to the  problem of deciding $FOTh(M_2)$.
\end{prop}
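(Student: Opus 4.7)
The plan is to construct an explicit translation $\varphi \mapsto \tilde{\varphi}$ that sends each $L_1$-sentence to an $L_2$-sentence with $M_1 \models \varphi$ iff $M_2 \models \tilde{\varphi}$, and then to observe that this translation is computable. Given such a translation, if we have an oracle for $FOTh(M_2)$, then on input $\varphi$ we compute $\tilde{\varphi}$, query the oracle once, and return its answer; this is exactly the kind of Turing reduction the proposition demands. The interpretation data we have to work with is the defining formula $\phi_S(\bar{x})$ for $S \subseteq M_2^n$, the defining formula $\phi_{=}(\bar{x},\bar{y})$ for $\pi^{-1}(=)$, and, for each function, relation, or constant symbol $\sigma$ in the signature of $L_2$ (to be more precise, $L_1$), a defining formula $\phi_\sigma$ for the preimage of its graph under $\pi$.

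The translation is then defined by induction on the complexity of $\varphi$. Each $L_1$-variable $x$ is replaced by a fresh $n$-tuple $\bar{x}$ of $L_2$-variables. An atomic formula of the form $\sigma(x_1,\dots,x_k) = y$ is translated to $\phi_\sigma(\bar{x}_1,\dots,\bar{x}_k,\bar{y})$, and equality $x=y$ is translated to $\phi_{=}(\bar{x},\bar{y})$. For monoids this means writing out $\phi_\cdot$ for multiplication and fixing a representative tuple for the preimage of $\varepsilon$. Boolean connectives pass through unchanged, and the key step for quantifiers is to relativise them to $S$:
\[
\widetilde{\exists x\, \psi} \;:=\; \exists \bar{x}\,\bigl(\phi_S(\bar{x}) \wedge \tilde{\psi}\bigr), \qquad \widetilde{\forall x\, \psi} \;:=\; \forall \bar{x}\,\bigl(\phi_S(\bar{x}) \to \tilde{\psi}\bigr).
\]
This ensures that only tuples in $S$ — i.e.\ those coming from $M_1$ via $\pi$ — are being quantified over.

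The correctness statement is that, for any $L_1$-formula $\varphi(x_1,\dots,x_k)$ and any $a_1,\dots,a_k \in M_1$ with $a_i = \pi(\bar{b}_i)$, one has
\[
M_1 \models \varphi(a_1,\dots,a_k) \;\iff\; M_2 \models \tilde{\varphi}(\bar{b}_1,\dots,\bar{b}_k).
\]
This is proved by induction on $\varphi$: the atomic case is exactly the content of clause (2) of Definition \ref{interpretdefn}, the Boolean cases are immediate, and the quantifier cases work precisely because $\pi\colon S \to M_1$ is a bijection — the relativisation $\phi_S(\bar{x})$ plus the inductive hypothesis let us move a witness in $M_1$ to a witness in $S$ and back. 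Specialising to sentences ($k = 0$) gives the desired equivalence.

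The genuinely delicate point is not the induction itself, which is routine, but keeping the variable bookkeeping straight: each $L_1$-variable must be consistently expanded to the same $n$-tuple of $L_2$-variables throughout a formula, fresh names must be chosen to avoid collisions introduced by quantifier relativisation, and one must verify that the translation respects the (equivalence-class) nature of the bijection $\pi$ — i.e.\ that $\phi_=$ really behaves as equality under the recursion, so that substitution of $\pi$-equivalent tuples gives logically equivalent translated formulas. Once this bookkeeping is set up, computability of $\varphi \mapsto \tilde{\varphi}$ is transparent, and Proposition \ref{reductionlemma} follows.
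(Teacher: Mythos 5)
Your proposal is correct, and it is the standard argument. Note that the paper does not actually prove Proposition \ref{reductionlemma}; it defers entirely to Theorem 5.3.2 of Hodges and its remarks (the Reduction Theorem for interpretations). What you have written out --- replace each $L_1$-variable by an $n$-tuple of $L_2$-variables, translate atomic formulas via the defining formulas of the preimages of the graphs, relativise quantifiers to $\phi_S$, prove correctness by induction on formula complexity, and observe the translation is computable so that one oracle query suffices --- is precisely the content of that cited theorem, so you are supplying the proof the paper outsources. Two small points of bookkeeping you gloss over: general atomic formulas in the language of monoids are equations between nested terms, so one must first unnest (introducing existentially quantified, $S$-relativised variables for subterms) before your atomic clause applies; and a constant such as $\varepsilon$ cannot literally be replaced by "a representative tuple" unless that tuple is named by closed $L_2$-terms --- the correct move is to use the defining formula of the (singleton, since $\phi$ is a bijection) preimage of its graph, binding a fresh relativised variable. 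Neither issue is a gap in the mathematics, and your worry about $\phi_=$ behaving as genuine equality evaporates here because Definition \ref{interpretdefn} insists on a bijection, so $\phi^{-1}(=)$ is exactly the diagonal of $S$.
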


Next we define the notion of bi-interpretability, which will be the subject of section \ref{sectionBiinterpret}. By the definition of interpretations, it is straightforward to see that interpretations are transitive: if $M_1$ is interpretable in $M_2$, and $M_2$ is interpretable in $M_3$, then $M_1$ is interpretable in $M_3$. This implies that if two structures are \emph{mutually interpretable}, i.e. $M_1$ and $M_2$ are each interpretable in the other, then we obtain an interpretation of $M_1$ in itself, and likewise an interpretation of $M_2$ in itself.
\begin{defn}[Bi-interpretability]
    Given $M_1$ an $L_1$-structure, and $M_2$ an $L_2$-structure, we say $M_1$ and $M_2$ are \emph{bi-interpretable} if $M_1$ and $M_2$ mutually interpretable, and the map $\phi_i$ interpreting $M_i$ in itself is definable in $M_i$, for $i= 1,2$. 
\end{defn}

\begin{eg}
    Presburger arithmetic is commonly expressed as $(\mathbb{N},0,1,+,\leq)$ and $(\mathbb{Z},0,1,+,-,\leq)$. These two models are bi-interpretable. 
\end{eg}    
    Indeed, the identity map $\phi$ on $\mathbb{N}\subset\mathbb{Z}$ definable by $0\leq x$ interprets $(\mathbb{N},0,1,+,\leq)$ in $(\mathbb{Z},0,1,+,-,\leq)$, and the map $\psi: \mathbb{N}^2 \to \mathbb{Z}$ with $\psi(a,b) = a-b$ interprets as a surjection $(\mathbb{Z},0,1,+,-,\leq)$ in $(\mathbb{N},0,1,+,\leq)$. We then obtain $\phi\psi: S \to \mathbb{N}$ from the definable set $S = \{(a,b)\ |\ a\geq b\}\subset \mathbb{N}^2$, with graph $$\{(a,b,c)\ |\ a\geq b \land c+b = a\}$$ definable in $(\mathbb{N},0,1,+,\leq)$. We also obtain $\psi\phi^2: T\to \mathbb{Z}$ with $T = \mathbb{N}^2$ definable in $\mathbb{Z}^2$ by $0\leq x\land 0\leq y$, whose graph $$\{(a,b,c)|\ 0\leq a\land 0 \leq b \land c = a-b\}$$ 
definable in $(\mathbb{Z},0,1,+,-,\leq)$.

We will use these two notions of Presburger arithmetic interchangeably.

\section{The case $n=2$}\label{P2basecase}
First, we will explicitly treat the case $n=2$ of tableaux on two letters.  Such tableaux have three possible columns:

\begin{ytableau}
    2\\1
\end{ytableau},$\ $ \begin{ytableau}
    1
\end{ytableau},$\ $ \begin{ytableau}
    2
\end{ytableau}.

 Denote by $t$ the word $21$ in $A^*$. Then by abuse of notation $\mathcal{C}^* = \{t,1,2\}^*$, and our rewriting system becomes: $$ R = \{21 \to t\ ,\ 2t \to t2\ ,\ 1t\to t1\}$$
By the two commutativity rules, and the fact that any factor $21$ would not appear in a reduced word, we can write any reduced word $w\in (\mathcal{C},R)$ as some $t^{w_1}1^{w_2}2^{w_3}$. Thus, by completeness of the rewriting system, each element of $P_2$ corresponds to a triple $(w_1,w_2,w_3)\in\mathbb{N}^3$, associated to a normal form $t^{w_1}1^{w_2}2^{w_3}$. Likewise, each such triple corresponds to a tableau, hence an element of $P_2$. 

We will take the $\mathbb{Z}$ version of Presburger arithmetic, to make use of subtraction. Consider the map $\phi:S \to P_2$, where $S = \mathbb{N}^3\subset \mathbb{Z}^3$ is definable by the formula $$(0\leq x_1)\land (0\leq x_2)\land (0\leq x_3)$$
and $\phi(x_1,x_2,x_3) = t^{x_1}1^{x_2}2^{x_3}$. This is a bijection from a definable set in  $(\mathbb{Z},0,1,+,-,\leq)$, and the inverse graph of equality will be 
\begin{align*}
    \phi^{-1}(=) &= \left\{ (a_1,a_2,a_3,b_1,b_2,b_3) \in \mathbb{N}^6\ |\ t^{a_1}1^{a_2}2^{a_3} = t^{b_1}1^{b_2}2^{b_3}\right\}\\
    &=\left\{ (a_1,a_2,a_3,b_1,b_2,b_3) \in \mathbb{N}^6\ |\ a_1 = b_1,\ a_2 = b_2,\ a_3=b_3\right\}  \subset \mathbb{Z}^6
\end{align*}
which is definable by the formula $(\underline{a}\in S)\land(\underline{b}\in S)\land \bigwedge\limits_{i \in \{1, 2, 3\}} (a_i~=~b_i)$.

Next we check the preimage of the graph of multiplication
\begin{align*}
    \phi^{-1}(\circ) = \left\{(\underline{a},\underline{b},\underline{c}) \in S^3\ |\ t^{a_1}1^{a_2}2^{a_3}t^{b_1}1^{b_2}2^{b_3} = t^{c_1}1^{c_2}2^{c_3}\right\}.
\end{align*}
Explicitly checking the multiplication yields
\begin{align*}
    t^{a_1}1^{a_2}2^{a_3}t^{b_1}1^{b_2}2^{b_3} &= t^{a_1+b_1}1^{a_2}2^{a_3}1^{b_2}2^{b_3}\\
    &=\begin{cases}
        t^{a_1+b_1 + a_3}1^{a_2+b_2-a_3}2^{b_3},\ a_3\leq b_2\\
        t^{a_1+b_1 + b_2}1^{a_2}2^{b_3 + a_3 -b_2},\ b_2\leq a_3
    \end{cases}
\end{align*}
 thus obtaining the following formula for $\phi^{-1}(\circ)$
\begin{align*}
    (\underline{a}\in S)\land(\underline{b}\in S)\land(\underline{c}\in S)\land&[(a_3\leq b_2\land c_1 = a_1+b_1+a_3\land c_2 = a_2+b_2-a_3\land c_3 = b_3)\\
    \lor&(b_2\leq a_3 \land c_1 = a_1+b_1+b_2 \land c_2 = a_2 \land c_3 = b_3 + a_3 - b_2)]
\end{align*}

where $\underline{a}\in S$ is a shorthand for$(a\leq x_1)\land (0\leq a_2)\land (0\leq a_3)$. It follows then that $\phi$ is an interpretation of $P_2$ in Presburger arithmetic. This yields the following result.

\begin{thrm}
    $P_2$ has decidable first order theory
\end{thrm}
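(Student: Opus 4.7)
The plan is to observe that essentially all the work has been done in the preceding discussion, and the theorem follows from assembling the pieces. The key object is the map $\phi\colon S\to P_2$ with $S=\mathbb{N}^3$ and $\phi(x_1,x_2,x_3)=t^{x_1}1^{x_2}2^{x_3}$. By Lemma \ref{rewritelemma} applied to $n=2$, every element of $P_2$ has a unique normal form of the form $t^{w_1}1^{w_2}2^{w_3}$, so $\phi$ is a bijection; combined with the definability of $S$, $\phi^{-1}(=)$, and $\phi^{-1}(\circ)$ exhibited above, Definition \ref{interpretdefn} says exactly that $\phi$ is an interpretation of $P_2$ in $(\mathbb{Z},0,1,+,-,\leq)$.

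From here the proof is a two-line application of the tools already set up. First I would invoke Proposition \ref{reductionlemma} with $M_1=P_2$ and $M_2=(\mathbb{Z},0,1,+,-,\leq)$ to reduce the problem of deciding $FOTh(P_2)$ to the problem of deciding $FOTh(\mathbb{Z},0,1,+,-,\leq)$. Then I would apply Lemma \ref{Presburger}, together with the bi-interpretability of $(\mathbb{N},0,1,+,\leq)$ and $(\mathbb{Z},0,1,+,-,\leq)$ noted in the example immediately following the definition of bi-interpretability, to conclude that this latter theory is decidable. The composition of the reduction with the Presburger decision procedure then decides $FOTh(P_2)$.

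The only substantive verification needed is the claim underlying the displayed case analysis for $\phi^{-1}(\circ)$, namely that multiplying two normal forms $t^{a_1}1^{a_2}2^{a_3}\cdot t^{b_1}1^{b_2}2^{b_3}$ and reducing via the rewriting system $R=\{21\to t,\;2t\to t2,\;1t\to t1\}$ produces the two cases stated. I would handle this by first commuting all occurrences of $t$ to the left (using the last two rules), reducing to computing the normal form of $1^{a_2}2^{a_3}1^{b_2}2^{b_3}$; then repeated application of $21\to t$ converts $\min(a_3,b_2)$ pairs into $t$'s, leaving either $1^{a_2+b_2-a_3}2^{b_3}$ (when $a_3\le b_2$) or $1^{a_2}2^{b_3+a_3-b_2}$ (when $b_2\le a_3$). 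Since the two cases agree on the overlap $a_3=b_2$ this is well-defined, and the expressions for $c_1,c_2,c_3$ are all Presburger-definable, so this is a routine but essential check rather than a genuine obstacle.

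There is really no hard step: the theorem is a corollary of the interpretation that has just been exhibited, and I would present it as such, with the case analysis for multiplication being the only piece meriting explicit verification.
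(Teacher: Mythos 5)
Your proposal is correct and follows essentially the same route as the paper: the theorem is deduced directly from the interpretation $\phi$ exhibited in the preceding discussion, combined with Proposition \ref{reductionlemma} and the decidability of Presburger arithmetic (Lemma \ref{Presburger}). Your explicit verification of the two-case multiplication formula matches the computation the paper carries out just before the theorem statement.
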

\begin{proof}
Since $\phi$ above is an interpretation of $P_2$ in  $(\mathbb{Z},0,1,+,-,\leq)$, every first order formula of $P_2$ is interpreted as a first order formula of Presburger arithmetic, which is decidable by lemma \ref{Presburger}.
\end{proof}
Note that this argument is closely related to the proof that the bicyclic monoid $B = \langle a,b\ |\ ba = \varepsilon\rangle$ has decidable first order theory (see section 2.4 of \cite{diekert2008}). Indeed, the map $\psi:P_2 \to B$ sending 1 to $a$, 2 to $b$, and $t$ to $\varepsilon$ is a monoid homomorphism, and $\psi\circ\phi:S \to B$ is an interpretation of the bicyclic monoid in Presburger arithmetic, in the sense of surjections.

\section{The general case}\label{Pngeneralcase}
 Throughout this section, let $k = |\mathcal{C}| = 2^n-1$. Index $\mathcal{C}$ by $i \in \{1,\dots, k\}$ with $i~<~j~\iff~c_i~\sqsubset~c_j.$

Let $S\subseteq \mathbb{N}^k$ be the set of all $(v_1,\dots,v_k)$ such that $c_1^{v_1}\dots c_k^{v_k}$ is the normal form of a tableau, and let $\phi:S\to P_n$ be the natural bijection. The normal form of any tableau will obey compatibility conditions: for each pair $$(a,b)\in \{1,\dots,k\}\times \{1,\dots,k\}$$ such that $a<b$ and $c_a\nsucceq c_b$, we have that either $v_a= 0$ or $v_b = 0$. Let $I~\subset~\{1,\dots,k\}^2$ be the set of all such pairs. Then $S\subset\mathbb{Z}^k$ is defined by the formula$$\bigwedge\limits_{i\in\{1,\dots,k\}} (0\leq x_i) \land \bigwedge\limits_{(a,b) \in I} \left[(x_a = 0)\lor(x_b = 0)\right].$$

We claim that $\phi$ is an interpreting map of $P_n$ in Presburger arithmetic. Again, we check the diagonal:
\begin{align*}
    \phi^{-1}(=) = \{(\underline{a},\underline{b}) \in S^2\ |\ \phi(\underline{a}) = \phi(\underline{b})\},
\end{align*}

which is definable by $(\underline{a}\in S)\land(\underline{b}\in S)\land \bigwedge\limits_{i\in\{1,\dots,k\}}(a_i = b_i)$ as in the $n=2$ case. It remains to check whether the preimage of the multiplication graph $$ \phi^{-1}(\circ) = \left\{(\underline{a},\underline{b},\underline{c}) \in S^3\ |\ \phi(\underline{a})\phi(\underline{b}) = \phi(\underline{c})\right\}\subset \mathbb{Z}^{3k}$$
is definable.

\subsection{Multiplication -- the idea}
Using section \ref{P2basecase} as a base case, we will proceed with the induction hypothesis that, for each $2\leq i \leq n-1$, we have a formula $\eta_i$ in Presburger arithmetic defining multiplication in $P_i$.

We first consider the structure of multiplication in $P_n$. The recursive nature of Schensted's algorithm yields a characterisation of multiplication in $P_n$ via bottom rows and top tableaux.

\begin{defn} We call a tableau $t\in P_n$ a \emph{top tableau} if its row reading is a word over $\{2,\dots, n\}^*$. i.e. there are no 1's appearing in the tableau word representing $t$.
\end{defn}

Note that each $u\in P_n$ will have an associated top tableau: if $u = r_1\dots r_l$, where each $r_i$ is a row, then $r_1\dots r_{l-1}$ will be a top tableau.

For $u,v\in P_n$, the product $uv$ will be computed by first running an insertion algorithm into the bottom row of $u$, and then inserting any bumped letters into the top tableau associated to $u$. We will make this idea more precise.

\begin{defn}\label{def:topbot} Define the following maps:
    \begin{enumerate}
           \item The top map $T:P_n \to P_n$ maps an element $w$ with row form $r_1\dots r_l$ in $A^*$ to its corresponding top tableau $T(w) = r_1\dots r_{l-1}$
        \item The bottom map $B:P_n \to P_n$ maps an element $w$ as above to its bottom row $B(w) = r_l$
    \end{enumerate}
\end{defn}
\begin{eg}
    Let $t = $ \begin{ytableau}
      3&4\\
      2&3&3\\
    1&1&2&4&4\\
    \end{ytableau}. Then $T(t) = 34\ 233$ and $B(t) = 11244$
\end{eg}

For $u,v\in P_n$, by the structure of Schensted's algorithm, the product $uv$ will run an insertion algorithm first into $B(u)$, followed by any letters that are bumped being inserted into $T(u)$. This yields the following characterisation of the top and bottom of the product:
    \begin{align*}
        T(uv) &= T(u)T(B(u)v)\\
        B(uv) &= B(B(u)v)
    \end{align*}
Where equality is taken to mean equality in $P_n$, not equality of words. Note that the set of top tableaux, which is equivalently the image of $T$, is a submonoid isomorphic to $P_{n-1}$ over the alphabet $\{2,\dots,n\}$. Thus by our induction hypothesis the product of top tableaux will be definable via $\eta_{i-1}$. Therefore, if we can define the row $B(uv)$, and a way of stitching $T(uv)$ and $B(uv)$ into one tableau $uv$, we will obtain $\eta_i$ a formula defining multiplication in $P_n$
\begin{defn}
      The stitch map $\Sigma:P_n\times P_n\to P_n$ is defined as follows. For $u\in P_n$ a top tableau with row reading $r_1\dots r_n\in A^*$ and $v$ a row with row reading $r_v\in A^*$,  $\Sigma(u,v) = uv$ if $r_1\dots r_nr_v$ is the row reading of a tableau. Otherwise, $\Sigma(u,v) = \varepsilon$.       
\end{defn}
If $\Sigma$ has nontrivial output, we call $u$ and $v$ \emph{compatible}, and $uv$ the ``stitched" tableau. 
\begin{eg}
    Suppose $u = 43322234$ and $v = 11113$. Then $\Sigma(u,v) = uv$, with corresponding tableau 
    
    \centering{\begin{ytableau}
        4\\3&3\\2&2&2&3&4\\1&1&1&1&3
    \end{ytableau}}.
\end{eg}

Note that $\Sigma(T(u),B(u)) = u$. We can thus characterise multiplication via the above maps as follows \begin{align*}
    uv &= \Sigma(T(uv),B(uv))\\
     &=\Sigma(T(u)T(B(u)v),B(B(u)v)).
\end{align*}
Let us consider the structure of $w\in P_n$ as a word in normal form in $\mathcal{C}^*$. We have that $w = c_1^{w_1}c_2^{w_2}\dots c_k^{w_k}$ for some $w_i\in\mathbb{N}$ for each $i$, satisfying some compatibility conditions. But consider now each block $c_i^m$ for some $m\in\mathbb{N}$ as a tableau word in row form in the presentation $\langle A|K\rangle$. Then for $c_i = x_1x_2\dots x_r$ in row form in $A^*$, we have that $c_i^m = x_1^mx_2^m\dots x_r^m$ in row form in $A^*$. For each $c_i$, this row form is unique, since each column corresponds to a unique decreasing sequence in $A^*$. This will give us a useful way to write the column form of $w$ as a word in $A^*$.

Define $\alpha = (\alpha_1,\dots,\alpha_\ell)$ to be the finite sequence of letters in $A$ which first outputs in order the letters in the row form of $c_1$, then the letters of the row form of $c_2$, and so on. We also define $\beta = (\beta_1,\dots,\beta_\ell)$ to be the finite sequence, taking values in $\{1,\dots,k\}$, with $\beta_i = j$ when $\alpha_i$ is a letter from column $c_j$. Note that these sequences only depend on the rank of $P_n$, as it is defined using the columns generating $P_n$. Now, a straightforward check using Schensted's algorithm verifies that the word $ \alpha_1^{w_{\beta_1}}\dots \alpha_\ell^{w_{\beta_\ell}} \in A^*$ is equal to $w = c_1^{w_1}c_2^{w_2}\dots c_k^{w_k}$ in the plactic monoid, where $w_{\beta_i}$ is the coefficient of column $c_{\beta_i}$ in the normal form of $w$.  

\begin{eg}
    The seven columns of $P_3$:
    
    \begin{ytableau}
        3\\2\\1
    \end{ytableau}, \begin{ytableau}
        2\\1
    \end{ytableau}, \begin{ytableau}
        3\\1
    \end{ytableau}, \begin{ytableau}
        3\\2
    \end{ytableau}, \begin{ytableau}
        1
    \end{ytableau}, \begin{ytableau}
        2
    \end{ytableau},  \begin{ytableau}
        3
    \end{ytableau}, yield \begin{align*}
        \alpha &= 3,2,1,2,1,3,1,3,2,1,2,3\\
        \beta &= 1,1,1,2,2,3,3,4,4,5,6,7.
    \end{align*} 
    For an example word $w = c_1^3c_2c_3c_6$, we get the corresponding sequence $$3^32^31^32^11^13^11^13^02^01^02^13^0 = 33322211121312,$$ which after running Schensted's algorithm becomes the tableau 
    
    \centering{\ytableaushort{333,22223,111112}.}
\end{eg}

\begin{lemma}\label{sequencemultn}
    Consider $u,v,w \in P_n$. Suppose $v$ has normal form $c_1^{v_1}\dots c_k^{v_k}$. Then $w = uv$ is equivalent to the following:

    There exist $u_0,u_1,\dots ,u_\ell \in P_n$ such that $u_0 = u$, $u_\ell = w$, and we have a recursive formula for $u_i$
    $$u_i = u_{i-1}\alpha_i^{v_{\beta_i}}$$
\end{lemma}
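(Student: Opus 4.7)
The plan is to reduce the statement, via associativity in $P_n$, to the single identity
\[
v \;=\; \alpha_1^{v_{\beta_1}} \alpha_2^{v_{\beta_2}} \cdots \alpha_\ell^{v_{\beta_\ell}} \quad \text{in } P_n,
\]
where the right-hand side is simply the word obtained by reading the normal form $c_1^{v_1}\cdots c_k^{v_k}$ and expanding each block $c_i^{v_i}$ into its letters. Once this identity is in hand, the forward direction follows by defining $u_i = u \cdot \alpha_1^{v_{\beta_1}} \cdots \alpha_i^{v_{\beta_i}}$; these $u_i$ visibly satisfy the recursion and $u_\ell = uv = w$. The reverse direction follows by unwinding the recursion: an easy induction on $i$ gives $u_i = u \cdot \alpha_1^{v_{\beta_1}} \cdots \alpha_i^{v_{\beta_i}}$, so $u_\ell = uv$, whence $w = uv$.

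Thus the only real content is to verify the displayed identity. This factors through the column-by-column claim: for any single column $c_i = x_1 x_2 \cdots x_r$ (with $x_1 > x_2 > \cdots > x_r$) and any $m \in \mathbb{N}$,
\[
c_i^m \;=\; x_1^m x_2^m \cdots x_r^m \quad \text{in } P_n.
\]
To see this, consider the rectangular array with $r$ rows and $m$ columns in which the $j$-th row from the top consists of $m$ copies of $x_j$. Its rows are constant (hence weakly increasing) and its columns are exactly $x_1 > x_2 > \cdots > x_r$ (hence strictly decreasing), so it is a valid semistandard Young tableau. Its row reading is $x_1^m x_2^m \cdots x_r^m$ and its column reading is $(x_1 x_2 \cdots x_r)^m = c_i^m$, so both words reduce under $P$ to the same tableau, giving the identity in $P_n$. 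Concatenating this identity across the blocks $c_1^{v_1}, \ldots, c_k^{v_k}$ and using that the sequence $\beta$ lists the column indices in non-decreasing order (by construction of $\alpha$ and $\beta$) yields $v = \alpha_1^{v_{\beta_1}} \cdots \alpha_\ell^{v_{\beta_\ell}}$ in $P_n$.

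The main (and really the only) obstacle is the column-power identity $c_i^m = x_1^m \cdots x_r^m$; everything else is bookkeeping with associativity. I do not anticipate needing to invoke Knuth's relations directly, since the tableau-theoretic argument via comparing row and column readings is cleaner and avoids a manipulation-heavy induction.
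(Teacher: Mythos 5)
Your proof is correct and takes essentially the same route as the paper, which simply declares the lemma ``immediate from the structure of the insertion algorithm and the fact $v = \alpha_1^{v_{\beta_1}}\cdots\alpha_\ell^{v_{\beta_\ell}}$'', the latter identity being asserted without proof in the preceding discussion. Your rectangle argument comparing row and column readings is a clean justification of the column-power identity $c_i^m = x_1^m\cdots x_r^m$ that the paper leaves implicit, and the rest (associativity bookkeeping in both directions) matches the intended argument.
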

This result is immediate from the structure of the insertion algorithm and the fact $v = \alpha_1^{v_{\beta_1}}\dots \alpha_\ell^{v_{\beta_\ell}}$

\begin{defn}
    For each $x\in A$ the map $\mu_x:\mathbb{N}\times S\to S$  is such that $\phi(\mu_x(m, \underline{a})) = \phi(\underline{a})x^m$
\end{defn}

\begin{cor}\label{simplification}
$\phi^{-1}(\circ)$ is definable if the maps $\mu_x$ are definable for each $x\in A$. 
\end{cor}
\begin{proof}
    By lemma \ref{sequencemultn}, given $\underline{a},\underline{b}\in S$, we have that $\phi(\underline{c}) = \phi(\underline{a})\phi(\underline{b})$ if and only if there is some $\underline{c}^0,\dots, \underline{c}^\ell$ such that $\underline{c}^0 = \underline{a},\ \underline{c} = \underline{c}^\ell$, and $$\underline{c}^i = \mu_{\alpha_i}(b_{\beta_i},\underline{c}^{i-1})$$
    so the preimage of the graph of multiplication is a composition of finitely many applications of $\mu_x$, which will be definable if each $\mu_x$ is definable.
\end{proof}

\subsection{The formula defining $\mu_x$}
Henceforth, $x$ is a fixed letter in $A$.

Recall that if $\underline{b} =\mu_x(m,\underline{a})$, then \begin{align*}
    \phi(\underline{b}) &= \phi(\underline{a})x^m\\ &=\Sigma(T(\phi(\underline{a}))T(B(\phi(\underline{a}))x^m),B(B(\phi(\underline{a}))x^m)).
\end{align*}
So we wish to obtain a formula of Presburger arithmetic describing $$\underline{b} = \phi^{-1}\Sigma(T(\phi(\underline{a}))T(B(\phi(\underline{a}))x^m),B(B(\phi(\underline{a}))x^m))$$
We can break this down into a composition of several maps. First, define $\underline{a}^1$ and $\underline{a}^2$ such that 
\begin{align*}
    \underline{a}^1 &= \phi^{-1}T\phi(\underline{a})\\
    \underline{a}^2 &= \phi^{-1}B\phi(\underline{a})
\end{align*}
Next, considering $R\subset P_n$ the set of row words, we define two maps $\rho_1,\rho_2: R~\to~S$ such that, for $r\in R$, $\phi(\rho_1(r)) = T(rx^m)$ and $\phi(\rho_2(r)) = B(rx^m)$. Then since $\phi(\underline{a}^2)$ is a row, we can define $\underline{a}^3$ and $\underline{a}^4$ to be such that 
\begin{align*}
    \underline{a}^3 = \rho_1(\phi(\underline{a}^2))\\
    \underline{a}^4 = \rho_2(\phi(\underline{a}^2))
\end{align*}
That is, $\phi(\underline{a}^3) = T(B(\phi(\underline{a}))x^m)$ and $\phi(\underline{a}^4) = B(B(\phi(\underline{a}))x^m)$. 

Next, define $\underline{a}^5$ to be such that $$\phi(\underline{a}^5) = T(\phi(\underline{a}))T(B(\phi(\underline{a}))x^m) = \phi(\underline{a}^1)\phi(\underline{a}^3).$$
By our induction hypothesis, this will be definable in Presburger arithmetic, as the coefficients in $\underline{a}^5$ will either be calculated by the formula $\eta_{n-1}$, or will equal zero. 

Finally, we have that $$\underline{b} = \phi^{-1}\Sigma(\phi(\underline{a}^5),\phi(\underline{a}^4)).$$ Since the composition of definable maps is definable, we have that $\mu_x$ is definable precisely when $\phi^{-1}T\phi(\underline{a}), \phi^{-1}B\phi(\underline{a}), \rho_1, \rho_2,$ and $\phi^{-1}\Sigma(\phi(\ ),\phi(\ ))$ are definable. This will be the subject of the following three lemmas.
\begin{lemma}\label{TBPEdefd} The following maps are definable:
\begin{enumerate} [(i)]
\item $\phi^{-1}B\phi: S \to S$ 
\item $\phi^{-1}T\phi: S \to S$ 
\end{enumerate}
\end{lemma}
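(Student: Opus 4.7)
The plan is to exhibit, for each map, an explicit Presburger formula relating the input coefficient vector $\underline{a}$ to the output $\underline{b}$. Because $\mathcal{C}$ and $A$ are fixed finite data for each $n$, the formulas will be finite conjunctions of linear equalities, which is as strong as Presburger definability needs.

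For the bottom map, observe that if $\phi(\underline{a}) = c_1^{a_1}\cdots c_k^{a_k}$, then reading the columns of the tableau from left to right (with multiplicity) gives $a_i$ copies of $c_i$, and the bottom row is obtained by selecting the bottommost letter of each such column. For each $i \in [k]$ let $\ell_i \in A$ denote the smallest entry of $c_i$, a fixed piece of data. Then $B(\phi(\underline{a}))$ contains exactly $m_x := \sum_{i : \ell_i = x} a_i$ copies of each $x \in A$. Since a single row is a tableau whose normal form uses only the single-letter columns $\{1\}, \ldots, \{n\}$, and letting $s_x$ be the index in $\mathcal{C}$ of the column $\{x\}$, we get $b_{s_x} = m_x$ and $b_j = 0$ for $j \notin \{s_1, \ldots, s_n\}$. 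This is visibly Presburger.

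For the top map, removing the bottom row deletes the bottommost letter of every column. For each $i$ with $|c_i|\geq 2$, let $f(i)\in[k]$ be the index of the (non-empty) column obtained from $c_i$ by deleting its bottom entry; for $|c_i|=1$ the column disappears entirely. Reading $T(\phi(\underline{a}))$ from left to right thus produces, for each $i$ with $|c_i|\geq 2$, exactly $a_i$ copies of $c_{f(i)}$. Grouping by column index yields $b_j = \sum_{i :\, |c_i|\geq 2,\ f(i) = j} a_i$ when $1 \notin c_j$, and $b_j = 0$ otherwise. That $\underline{b}$ automatically lies in $S$ is not a separate obligation: $T(\phi(\underline{a}))$ is itself a tableau, and its unique normal form satisfies the compatibility conditions defining $S$. (Indeed, the inequalities $y_h \leq z_h$ characterising $\alpha \succeq \beta$ at each height $h$ above the bottom are preserved when the bottom entries of both columns are deleted, so compatibility of the original columns passes to their images under $f$.)

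The only real work in either case is enumerating the finite combinatorial data $\{\ell_i\}_{i \in [k]}$ and $\{f(i)\}_{|c_i| \geq 2}$, which is a bounded lookup for each fixed $n$. There is no essential obstacle here, since $B$ and $T$ act purely column-by-column on the normal form; the heavier lifting for $\mu_x$ will come later when handling $\rho_1, \rho_2$ and the stitch map $\Sigma$, where the insertion/bumping dynamics must actually be encoded.
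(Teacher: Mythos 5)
Your proposal is correct and follows essentially the same route as the paper: your index sets $\{i : \ell_i = x\}$ and the fibres $f^{-1}(j)$ are exactly the paper's sets $B_x$ and $T_j$, and the resulting linear formulas coincide. The only addition is your explicit check that the output lies in $S$ (compatibility being preserved under deleting bottom entries), which the paper leaves implicit.
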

\begin{proof} 
$(i)$ Define the finite sets $B_a$ for each $a\in A$ by $$B_a = \{j\in\{1,\dots,k\}\ |\ c_j = x_m\dots x_1 \land x_1 = a\}$$ which are nonempty for each $a$. Then we get that $\underline{b} = \phi^{-1} (B(\phi(\underline{a}))$ if and only if the following formula holds: $$\bigwedge\limits_{i\in \{1,\dots,k-n\}} (b_i = 0) \land \bigwedge\limits_{i\in\{1,\dots,n\}} \left(b_{k-n+i} = \sum_{j\in B_i} a_j\right)$$
The first part of the formula denoting the coefficient of each column of size $\geq2$ being zero, and the second part denoting the fact that each column $x_m \dots x_1$ in $\phi(\underline{a})$ contributes to the coefficient of the $x_1$ letter in the bottom row. 

$(ii)$ Define the similar sets $T_i$ for each $i \in \{1,\dots,k\}$ by $$T_i = \{j \in \{1,\dots,k\}\ |\ c_j = x_m\dots x_1 \land x_m\dots x_2 = c_i\}$$

Note that if $i \in B_1$, then $T_i = \emptyset$. Now, we have that $\underline{b} = \phi^{-1}(T(\underline{a}))$ if and only if the following formula holds: $$\bigwedge\limits_{i\in\{1,\dots,k\}} \left(b_i = \sum_{j \in T_i} a_j\right)$$
Where we take the sum over an empty indexing set to be 0. 
\end{proof}

Note that the sets $T_i$ and $B_a$ can be constructed algorithmically for any given  $n$. Given the set of columns as decreasing sequences, we can check membership in each $B_a$ by considering the minimal element of a column, and we can check membership in each $T_i$ by considering the column without its minimal element. Note also that $\{B_a: a\in A\}$ and $\{T_j: j\in \{1,\dots,k\},\ T_j\neq \emptyset\}$ are partitions of $\{1,\dots,k\}$.

Next, we move on to defining the maps  $\phi^{-1}T(rx^m)$ and $\phi^{-1}B(rx^m)$. Here, we consider $S_R \subset S$ to be the subset of normal forms corresponding to rows (i.e. $S_R$ is the preimage of $R\subset P_n$).
\begin{lemma}\label{rowmultn} The following maps are definable:
\begin{enumerate}
    \item $\rho_1\phi: S_R \to S$ taking $\phi^{-1}(r)$ to $\phi^{-1}T(rx^m)$
    \item $\rho_2\phi: S_R \to S$ taking $\phi^{-1}(r)$ to $\phi^{-1}B(rx^m)$
\end{enumerate}
\end{lemma}
\begin{proof} First note that $S_R$ is a subset definable by $$\bigwedge_{i\in\{1,\dots,k-n\}} x_i = 0$$
and $\rho_1\phi, \ \rho_2\phi$ will be maps from $S_R$ to $S_R$. Indeed, for $\rho_2\phi$ this is immediate, but for $\rho_1\phi$ note that by \cite{schensted61} the number of rows after running Schensted's algorithm on any $w \in A^*$ is equal to the length of the longest strictly decreasing subsequence of $w$. Now, since $r$ is non-decreasing as a sequence in $A^*$, the longest strictly decreasing subsequence of $rx^m$ viewed as a word in $A^*$ can have length at most 2. Thus the top $T(rx^m)$ can have at most one row, meaning the image of $\rho_1\phi$ must be in $S_R$. 

    Now, write $\underline{r} = (0,\dots,0,r_1,r_2,\dots,r_n)$. We will describe explicitly $\underline{c} = (0,\dots,0,c_1,\dots,c_n)$ and $\underline{d}~=~(0,\dots,d_1,\dots,d_n)$ such that $\underline{c} = \rho_1\phi(w)$ and $\underline{d} = \rho_2\phi(w)$.

    \begin{etaremune}
        \item We will first consider the $\rho_2\phi$ case, which corresponds to $B(w)$. In the setting of the presentation $\langle A|K\rangle$, we will have $x^m$ inserted into $$r = 1^{r_1}2^{r_2}\dots(x+1)^{r_{x+1}}(x+2)^{r_{x+2}}\dots n ^{r_n}$$ It will bump $m$ letters from this row, starting at $x+1$. This means that $d_i = r_i$ for $i<x$ and $d_x = r_x+m$. We will now consider the later entries of $\underline{d}$, which will split into several cases depending on the size of $m$.

    In the first case, suppose $m\leq r_{x+1}$. Then we will bump $m$ letters $x+1$ and replace them with letters $x$. This yields the effect that $d_{x+1} = r_{x+1} - m$ and $d_i = r_i$ for all $i>x+1$.

    In the next case, suppose $r_{x+1}\leq m \leq r_{x+1}+r_{x+2}$. Then all letters $x+1$ are bumped, as are $m-r_{x+1}$ letters $x+2$. Thus we have that $d_{x+1} = 0,\ d_{x+2} = r_{x+2} + r_{x+1} - m$, and $d_i = r_i$ for all $i>x+2$. 

    Generalising the above, suppose, for some $i\leq n-x$, we have \begin{align*}
        \sum_{j=1}^{i-1} r_{x+j}\leq m \leq \sum_{j=1}^{i} r_{x+j}
    \end{align*}
    Then in this case, $d_{x+j} = 0$ for each $0<j<i$, and $d_{x+i} = \sum_{j=1}^{i} r_{x+j} - m$, and all later entries remain unchanged. 

    The last case to consider is when  \begin{align*}
        \sum_{j=1}^{n-x} r_{x+j}\leq m
    \end{align*}
    In which case all letters bigger than $x$ will be bumped and we have $d_{x+j}~=~0$ for all $j$. 

    Each case yields a formula in terms of $\leq$, addition, and subtraction. Then the disjunction of the above cases, which will be a finite formula, will define $\underline{d}$ such that $\phi(\underline{d}) = B(w)$, hence $\underline{d} = \rho_2\phi(w)$.  
    \item 
    Let us now consider $\rho_1\phi$, which corresponds to $T(w)$. This will be the row of bumped letters, which will mean that $c_i = 0$ for any $i\leq x$. For the later entries of $\underline{c}$, we will again have cases corresponding to the size of $m$.

    Suppose that as above, we have some $1\leq i\leq n-x$ such that \begin{align*}
        \sum_{j=1}^{i-1} r_{x+j}\leq m \leq \sum_{j=1}^{i} r_{x+j}
    \end{align*}
    Then we will bump all letters $x+1,\dots, x+i-1$, as well as some letters $x+i$. Therefore $c_{x+j} = r_{x+j}$ for $0<j<i$, and $c_{x+i} = m - \sum_{j=1}^{i-1} r_{x+j}$. Note that the length of $r_1$ is always exactly $m$ in this case. 
    
    Now suppose we are in the case
    \begin{align*}
        \sum_{j=1}^{n-x} r_{x+j}\leq m
    \end{align*}
    Then we will have that $c_{x+j} = r_{x+j}$ for each $0<j<n-x$. 
    
    As above, the disjunction of these cases yields a formula defining $\underline{c} = \rho_1\phi(w)$.\end{etaremune}\end{proof}
We will now show the definability of the stitch map
\begin{lemma}\label{stitchdefinable} 
The map $\phi^{-1}\Sigma(\phi(\ ),\phi(\ )): S^2\to S$ is definable.
\end{lemma}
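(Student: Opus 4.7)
The plan is to give an explicit combinatorial description of $\Sigma$ in terms of the normal-form coefficients and then observe that every ingredient reduces to linear arithmetic. First I handle the trivial case: the condition that $\phi(\underline{a}^5)$ is a top tableau is definable by $\bigwedge_{i:\, 1 \in c_i} a^5_i = 0$, and $\phi(\underline{a}^4)$ being a row is definable by $\bigwedge_{i \leq k-n} a^4_i = 0$. If either fails, the formula forces $\underline{b} = 0$; from now on assume both hold.

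Write $u_i := a^5_i$, $v_\ell := a^4_{k-n+\ell}$ for $\ell \in [n]$, and the partial sums $U_i := \sum_{j \leq i} u_j$, $V_\ell := \sum_{\ell' \leq \ell} v_{\ell'}$. Because the normal form lists columns of $\phi(\underline{a}^5)$ in $\sqsubseteq$-order, the $p$-th column from the left is the unique $c_i$ with $U_{i-1} < p \leq U_i$; likewise the $p$-th letter of the row $\phi(\underline{a}^4)$ is the unique $\ell$ with $V_{\ell-1} < p \leq V_\ell$. Since the bottom row of a tableau is the longest, every column of $\phi(\underline{a}^5)$ reaches its bottom row, so the stitched tableau is obtained by appending the $p$-th letter of $v$ beneath the $p$-th column of $u$ for $p = 1, \dots, U_k$, with any surplus $v$-letters (when $V_n > U_k$) forming fresh single-letter columns on the right. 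Letting $\beta(c_j)$ denote the minimum entry of $c_j$, weak monotonicity of $v$ and of the bottom row of $u$ reduce validity of this procedure to the pointwise requirement that each appended letter is strictly less than the bottom of its column, which becomes the Presburger-definable condition
\[
\bigwedge_{i \in [k]}\bigl(u_i = 0 \ \lor\ U_i \leq V_{\beta(c_i)-1}\bigr).
\]
This already forces $V_n \geq U_k$, so no separate length condition is needed.

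When the above compatibility holds, the output coefficient $b_{i'}$ is given case by case. If $c_{i'}$ has length at least $2$, set $\ell := \beta(c_{i'})$ and let $i$ be the unique index with $c_i$ equal to $c_{i'}$ minus its bottom letter; then $b_{i'} = \max\bigl(0,\ \min(U_i, V_\ell) - \max(U_{i-1}, V_{\ell-1})\bigr)$, counting positions paired simultaneously with $c_i$ in $u$ and $\ell$ in $v$ (this correctly returns $0$ whenever $u_i = 0$, so the top-tableau restriction is baked in automatically). If $c_{i'}$ is the single letter $\ell$ (so $i' = k-n+\ell$), the only contribution is the leftover tail of $v$, giving $b_{i'} = \max\bigl(0,\ V_\ell - \max(U_k, V_{\ell-1})\bigr)$. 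Because $\min$ and $\max$ are Presburger-definable, combining these cases yields a finite Boolean combination of linear (in)equalities defining the graph of $\phi^{-1}\Sigma(\phi(\ ),\phi(\ ))$. The main obstacle is purely combinatorial bookkeeping: identifying, for each output column type, the unique $(i,\ell)$ pair (if any) that produces it, and correctly separating contributions between extensions of existing columns of $u$ and fresh single-letter columns arising from the leftover tail of $v$; once this pairing geometry is laid out, the formula itself collapses to linear arithmetic on the partial sums $U_i$ and $V_\ell$.
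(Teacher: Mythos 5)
Your proof is correct, and the underlying combinatorics --- pair the $p$-th letter of the row with the $p$-th column of the top tableau, with surplus letters becoming fresh singleton columns --- is exactly the mechanism the paper exploits. Where you genuinely differ is in how the output coefficients are extracted. The paper processes the output column types $c_1,\dots,c_k$ in $\sqsubseteq$-order and builds the answer greedily: it existentially quantifies auxiliary tuples $\underline{a}^0,\dots,\underline{a}^k,\underline{b}^0,\dots,\underline{b}^k$ recording which columns and letters have not yet been ``used up'', and sets $d_i = \min(a^{i-1}_{i_T}, b^{i-1}_{i_B})$ before subtracting. You instead give a closed form: the coefficient of an output column of length at least two is the cardinality of the intersection of the position intervals $(U_{i-1},U_i]$ and $(V_{\ell-1},V_\ell]$, namely $\max\bigl(0,\min(U_i,V_\ell)-\max(U_{i-1},V_{\ell-1})\bigr)$, together with the tail formula for singletons. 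This eliminates the auxiliary existential quantifiers and yields an essentially quantifier-free formula in the partial sums, which is arguably cleaner; the paper's recursion buys a more transparent match with the column-by-column bumping picture. Your compatibility condition $\bigwedge_{i}\bigl(u_i=0 \lor U_i\le V_{\beta(c_i)-1}\bigr)$ is equivalent to the paper's $\gamma$ (both encode, via weak monotonicity of the bottom row of $u$ and of $v$, that the letter placed under each column is strictly smaller than that column's bottom entry), and your observation that it subsumes both the top-tableau condition and the length requirement $V_n\ge U_k$ is right. Two cosmetic points for a final write-up: state the lemma for arbitrary $(\underline{a},\underline{b})\in S^2$ rather than for the specific $\underline{a}^5,\underline{a}^4$ arising in the $\mu_x$ construction; and note the degenerate case $v=\varepsilon$, $u\neq\varepsilon$, where a literal reading of the definition of $\Sigma$ would return $u$ while your formula (like the paper's) returns $\varepsilon$ --- this never occurs in the application, so it is harmless, but worth a sentence.
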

\begin{proof}

 The condition for $\Sigma$ to have nontrivial action is definable via the following formula:
    
  Suppose $\underline{a}\ ,\ \underline{b} \in S$. Consider the set $B_1 =\{i~\in~\{1,\dots,k\}\ |\ c_i~=~x_m\dots x_1 \wedge x_1 = a\}$
    as in Lemma \ref{TBPEdefd}. Then $\phi(\underline{a})$ being a top tableau is definable by $\left(\bigwedge\limits_{i \in B_1} a_i = 0\right)$. Also, $\phi(\underline{b})$ being a row is definable by the formula $\left(\bigwedge\limits_{i\in \{1,\dots,k-n\}} b_i = 0\right).$

Now, let $e_a = \sum_{i \in B_a} a_i$. In order for it to be possible to stitch two inputs, we need $e_2 \leq b_{k-n+1}$, $e_3\leq b_{k-n+2} +b_{k-n+1}-e_2$, and so on. We can rearrange this to get the following compatibility condition: $$\underline{a}\in S \wedge \underline{b}\in S\wedge \left(\bigwedge\limits_{i \in B_1} a_i = 0\right)\wedge\bigwedge\limits_{i\in \{1,\dots,k-n\}} (b_i = 0)\wedge\bigwedge\limits_{i \in \{2,\dots,n\}}\left( \sum_{j = 2}^i e_i\leq \sum_{j = 1}^{i-1} b_{k-n+j}\right)$$

Where we take empty sums to be 0. Note that all sums used are finite, so we obtain a valid formula in Presburger arithmetic. Denote this compatibility formula $\gamma(\underline{a},\underline{b})$.

 When $\gamma$ is satisfied, we wish to construct $\underline{d} = \phi^{-1}\Sigma(\phi(\underline{a} ),\phi(\underline{b} ))$. In order to do this, we first briefly discuss what happens at each step during the multiplication algorithm in this case.

When a top tableau $t$ is multiplied by a compatible bottom row $r$, by the bumping property of Schensted's algorithm and the compatibility condition, each letter bumped by a letter of $r$ will bump the letter directly above it, which in turn bumps the letter directly above it, and so on until an entire column of $t$ has been bumped up by one space. As an example, consider $t = 34 223$ and $r = 112$. The multiplication algorithm will bump columns as follows:

\begin{ytableau}[*(Thistle)]
    3 &4 \\ 2&2&3
\end{ytableau}$\times \ytableaushort[*(SeaGreen)]{112} =\ $\begin{ytableau}[*(Thistle)]
    3\\ 2 &4 \\ *(SeaGreen)1&2&3
\end{ytableau}$\times  \ytableaushort[*(SeaGreen)]{12} =\ $\begin{ytableau}[*(Thistle)]
    3 &4 \\ 2&2\\ *(SeaGreen)1&*(SeaGreen)1& 3
\end{ytableau}$\times  \ytableaushort[*(SeaGreen)]{2} =\ $\begin{ytableau}[*(Thistle)]
    3 &4 \\ 2&2&3 \\ *(SeaGreen) 1 & *(SeaGreen) 1 & *(SeaGreen) 2
\end{ytableau}

In this way, the columns of $t$ are always bumped up in turn from left to right.
Due to this left-to-right bumping process, and the ordering of the coefficients $d_1,\dots, d_k$, running Schensted's algorithm will first calculate $d_1$, then $d_2$ and so on. Furthermore, as the algorithm runs, it will insert letters (which are themselves columns) of $r$ into columns of $t$. We will call this insertion ``using up" columns of $r$ and $t$. Suppose we use up $k$ columns $c_i^k$ of $t$ and $k$ columns $c_j^k$ of $r$. Then the corresponding coefficients $a_i$ and $b_j$ will need to be changed to $a_i-k$ and $b_j-k$ respectively. We can formalise this as a recursive process to compute $\underline{d}$:

Suppose we have calculated the coefficients $d_1,\dots,d_{i-1}$, and have obtained modified elements $\underline{a}^{i-1}$ and $\underline{b}^{i-1}$ in $S$ representing all columns that have not yet been used up in the stitch. We calculate the coefficient $d_i$ as follows:

$d_i$ is the coefficient of $c_i\in\mathcal{C}$, which we can write as $c_i = x_m\dots x_1 \in A^*$. Then $x_m\dots x_2$ and $x_1$ are two columns, which we denote respectively by $c_{i_T}$ and $c_{i_B}$ in $\mathcal{C}$. Call the coefficients corresponding to $c_{i_T}$ in $\underline{a}^{i-1}$ and $c_{i_B}$ in $\underline{b}^{i-1}$ by $a_{i_T}^{i-1}$ and $b_{i_B}^{i-1}$ respectively. By the structure of Schensted's algorithm it is straightforward to see that $d_i = \min(a_{i_T}^{i-1},b_{i_B}^{i-1})$, which is definable in Presburger arithmetic by 
\begin{align*}(a^{i-1}_{i_T}\leq b^{i-1}_{i_B}\land d_i = a^{i-1}_{i_T})\lor (b^{i-1}_{i_B}\leq a^{i-1}_{i_T}\land d_i = b^{i-1}_{i_B})\end{align*}

Now define $a^i_j = a^{i-1}_j$ for $j\neq i_T$, and $a^i_{i_T} = a^{i-1}_{i_T} - d_i$. Likewise, define $b^i_j = b^{i-1}_j$ for $j \neq i_B$ and $b^i_{i_B} = b^{i-1}_{i_B} - d_i$. This corresponds to the fact that these columns have now been used up in a stitch. Note that we will always get one of these coefficients being set to zero. This is clearly definable in Presburger arithmetic, and we will denote the formula for obtaining $\underline{a}^i$ and $\underline{b}^i$ from $\underline{a}^{i-1}$ and $\underline{b}^{i-1}$ by $\delta_i$. Note that when $i = 1$, we take $\underline{a}^0 = \underline{a}$ and $\underline{b}^0 = \underline{b}$, which allows us to calculate $d_1$ in terms of $\underline{a}^0$ and $\underline{b}^0$.

Now we get that $\phi^{-1}\Sigma(\phi(\ ),\phi(\ ))$ has graph $\{\underline{a},\underline{b},\underline{d}\}$ satisfying:
\begin{align*}
    \gamma(\underline{a},\underline{b})\land \exists \underline{a}^0\dots\exists\underline{a}^k\exists \underline{b}^0\dots\exists\underline{b}^k: (\underline{a}^0 = \underline{a})\land(\underline{b}^0 = \underline{b})\land \left(\bigwedge_{i \in \{1,\dots,k\}}d_i = \min(a^{i-1}_{i_T}, b^{i-1}_{i_B})\land \delta_i\right)
\end{align*}
\end{proof}

With the above lemmas in hand, we can now prove the following result.
\begin{prop}\label{multngood}
For any $x\in A$, the map $\mu_x$ is definable.
\end{prop}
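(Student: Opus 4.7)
The plan is to unfold the decomposition
$$\phi(\underline{a})x^m \;=\; \Sigma\bigl(T(\phi(\underline{a}))\cdot T(B(\phi(\underline{a}))x^m),\ B(B(\phi(\underline{a}))x^m)\bigr)$$
noted immediately before Lemma~\ref{TBPEdefd}, and stitch together the three preceding definability lemmas. Concretely, I will introduce existentially quantified intermediate vectors $\underline{a}^1,\dots,\underline{a}^5\in\mathbb{N}^k$, one for each stage of the decomposition, and impose the atomic clauses $\underline{a}^1=\phi^{-1}T\phi(\underline{a})$ and $\underline{a}^2=\phi^{-1}B\phi(\underline{a})$ using Lemma~\ref{TBPEdefd}; $\underline{a}^3=\rho_1\phi(\underline{a}^2)$ and $\underline{a}^4=\rho_2\phi(\underline{a}^2)$ using Lemma~\ref{rowmultn} (with $m$ carried through as an explicit free parameter in both); and finally $\underline{b}=\phi^{-1}\Sigma(\phi(\underline{a}^5),\phi(\underline{a}^4))$ using Lemma~\ref{stitchdefinable}. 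The conjunction of these clauses, existentially quantified over $\underline{a}^1,\dots,\underline{a}^5$, will be the defining formula for the graph of $\mu_x$.

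The one non-routine step is constructing $\underline{a}^5$ with $\phi(\underline{a}^5) = \phi(\underline{a}^1)\phi(\underline{a}^3)$, which is a product of two top tableaux. The induction hypothesis supplies a Presburger formula $\eta_{n-1}$ defining multiplication in $P_{n-1}$, but $\eta_{n-1}$ speaks about tuples of length $k'=2^{n-1}-1$, whereas $\underline{a}^1$ and $\underline{a}^3$ live in $\mathbb{N}^k$ with $k=2^n-1$. The bridge is to observe that the top-column indices of $\mathcal{C}$ are precisely those $i\in[k]$ whose column $c_i$ does not contain the letter $1$; equivalently, those indices lying outside the set $B_1$ of Lemma~\ref{TBPEdefd}. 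Deleting the bottom letter and relabelling the alphabet $\{2,\dots,n\}$ as $\{1,\dots,n-1\}$ gives an explicit bijection between these top indices and the column indices of $P_{n-1}$. Since this bijection is a finite coordinate relabelling computable from $n$, substituting it into $\eta_{n-1}$ yields a Presburger formula on the top-column coordinates of $S$; conjoining $a^5_i=0$ for every non-top index $i$ completes the definition.

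Since the composition of definable maps is definable, the conjunction above defines the graph of $\mu_x$. I expect the main obstacle to be bookkeeping: ensuring that the coordinate translation between $P_n$ and $P_{n-1}$ is precise and that the resulting formula is genuinely uniform in $n$, so that the uniformity claim for the Diophantine problem in the infinite-rank setting survives. No individual step requires new ideas beyond those already established in Lemmas~\ref{TBPEdefd}, \ref{rowmultn}, and \ref{stitchdefinable} together with the induction hypothesis $\eta_{n-1}$.
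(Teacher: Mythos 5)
Your proposal matches the paper's proof essentially step for step: both compose the maps from Lemmas \ref{TBPEdefd}, \ref{rowmultn}, and \ref{stitchdefinable} through the intermediate vectors $\underline{a}^1,\dots,\underline{a}^5$, and both handle the top-tableau product $\underline{a}^5$ by setting $a^5_i=0$ for $i\in B_1$ and applying the induction hypothesis $\eta_{n-1}$ on the remaining coordinates. One small correction that does not affect the argument: the bijection between top columns and the columns of $P_{n-1}$ is given by relabelling $\{2,\dots,n\}$ as $\{1,\dots,n-1\}$ alone; \emph{deleting} the bottom letter as you describe would not be injective (every singleton column would collapse to the empty word).
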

\begin{proof} By the discussion before Lemma \ref{TBPEdefd}, $\mu_x$ is a composition of the maps $\phi^{-1}B\phi,\ \phi^{-1}T\phi,\ \rho_1\phi,\ \rho_2\phi,\ \phi^{-1}\Sigma(\phi(\ ),\phi(\ ))$, and multiplication of top tableaux. By Lemmas \ref{TBPEdefd}, \ref{rowmultn}, and \ref{stitchdefinable}, all five required maps are definable. 

To define $\underline{a}^5 = \phi^{-1}(T(\phi(\underline{a}))T(B(\phi(\underline{a}))x^m))$, first note that since $\underline{a}^5$ denotes a top tableau, we have that $a^5_i = 0$ for each $i\in B_1$ as defined in lemma \ref{TBPEdefd}. Furthermore, for each $i\notin B_1$, we have that $a^5_i$ is determined by the formula $\eta_{n-1}$ applied to $\phi^{-1}(T(\phi(\underline{a})))$ and $\phi^{-1}(T(B(\phi(\underline{a}))x^m))$. This determines $\eta_n$ by induction, with base case $\eta_2$ as detailed in section \ref{P2basecase}. This completes the proof. 
\end{proof}

Combining Proposition \ref{multngood} and Corollary \ref{simplification}, we obtain that $\phi^{-1}(\circ)$ is definable. Thus proving the following theorem

\begin{thrm}\label{thrmPninterpretable}
    The map $\phi:S \to P_n$ as defined above is an interpretation of $P_n$ in Presburger arithmetic. 
\end{thrm}
This reduces $FOTh(P_n)$  to the first order theory of Presburger arithmetic, which is decidable by lemma \ref{Presburger}, hence yielding the following result as a corollary.

\begin{thrm}
    For any $n\in\mathbb{N}$, the first order theory of $P_n$ decidable
\end{thrm}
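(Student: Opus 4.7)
The plan is essentially to harvest the machinery built up in Section 4 and show that the map $\phi : S \to P_n$ already introduced is a genuine interpretation of $P_n$ in Presburger arithmetic; then Proposition \ref{reductionlemma} together with Lemma \ref{Presburger} will finish everything.

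First I would verify the two conditions of Definition \ref{interpretdefn}. The domain $S \subseteq \mathbb{N}^k \subseteq \mathbb{Z}^k$ is definable by the formula
\[
\bigwedge_{i\in[k]} (0\leq x_i)\ \land\ \bigwedge_{(a,b)\in I}\bigl[(x_a = 0)\lor(x_b = 0)\bigr]
\]
displayed at the start of Section 4, where $I$ records the incompatible pairs of columns; the bijectivity of $\phi$ onto $P_n$ is immediate from uniqueness of normal forms (Lemma \ref{rewritelemma}). The preimage $\phi^{-1}({=})$ of equality is defined by componentwise equality on $S^2$.

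Second, and this is the only substantive point, I would handle the preimage $\phi^{-1}(\circ)$ of the multiplication graph. By Corollary \ref{simplification}, definability of $\phi^{-1}(\circ)$ reduces to definability of each one-letter multiplication map $\mu_x : \mathbb{N}\times S \to S$, and Proposition \ref{multngood} provides exactly this: an inductive construction of a Presburger formula $\eta_n$, with base case $\eta_2$ from Section 3.1, whose inductive step chains the definable maps $\phi^{-1}B\phi$, $\phi^{-1}T\phi$, $\rho_1\phi$, $\rho_2\phi$, and $\phi^{-1}\Sigma(\phi(\ ),\phi(\ ))$ of Lemmas \ref{TBPEdefd}, \ref{rowmultn} and \ref{stitchdefinable}, together with an application of $\eta_{n-1}$ to handle multiplication of top tableaux.

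Third, with both conditions of Definition \ref{interpretdefn} verified, $\phi$ is an interpretation of $P_n$ in $(\mathbb{Z},0,1,+,-,\leq)$. By Proposition \ref{reductionlemma} the problem of deciding $FOTh(P_n)$ reduces to deciding $FOTh(\mathbb{Z},0,1,+,-,\leq)$, which is decidable by Lemma \ref{Presburger}. There is no real obstacle left at the level of the theorem itself; all the difficulty has been absorbed into Proposition \ref{multngood} and the three supporting lemmas, where the careful casework of Schensted insertion into a single row and the column-by-column stitching procedure had to be encoded in Presburger-definable form. The proof of the theorem is therefore essentially a one-line application of the reduction lemma once the interpretation has been assembled.
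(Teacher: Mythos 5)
Your proposal is correct and follows exactly the paper's own argument: the paper's proof of this theorem is likewise a direct appeal to Corollary \ref{simplification} and Proposition \ref{multngood} for the definability of $\phi^{-1}(\circ)$, followed by Proposition \ref{reductionlemma} and Lemma \ref{Presburger}. The extra checks you spell out (definability of $S$, bijectivity of $\phi$, definability of $\phi^{-1}({=})$) appear in the paper at the start of Section 4 rather than inside the theorem's proof, but the content is identical.
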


\section{Definable submonoids and bi-interpretability}\label{sectionBiinterpret}

In a plactic monoid of any rank, the centre of $P_n$ will be generated by the column $c_1\in\mathcal{C}$ corresponding to the decreasing sequence $n(n-1)\dots 21\in A^*$. We have seen that the centre of any monoid is a definable subset, so we have $Z(P_n) = \{ c_1^n\ |\ n\in\mathbb{N}\}$ a definable subset of $P_n$ isomorphic to $(\mathbb{N},0,1,+,\leq)$, with $a\leq b$ in $\mathbb{N}$ corresponding to the formula $\exists y: c_1^ay=c_1^b$, and addition corresponding to monoid multiplication. 

We can therefore take for any $n$, the map $\psi:Z(P_n)\to \mathbb{N}$ to be an interpreting map of Presburger arithmetic in $P_n$. In this section, we will show that these mutually interpretable structures are, in fact, bi-interpretable. 

\subsection{The case for $P_2$}
In the style of section \ref{P2basecase}, take $\mathcal{C} = (t,1,2)$ with $Z(P_2) = \{t^n\ |\ n\in\mathbb{N}\}$. 
Firstly, note that $\phi: \mathbb{N}^3\to P_2$ as defined in that section  is also an interpreting map of $P_2$ into the $\mathbb{N}$ version of Presburger arithmetic, by rearranging any subtraction formulas $a = b-c$ to $a+c = b$. 

Let  $\psi:Z(P_2)\to \mathbb{N}$ be the interpretation of Presburger arithmetic in $P_2$ described above. Then taking $S$ as the definable subset $\{(n,0,0)\ |\ n\in\mathbb{N}\}\subset\mathbb{N}^3$, the map $\psi\phi: S\to \mathbb{N}$ is the isomorphism sending  $(n,0,0)$ to $n$, which is clearly definable. 

Now, consider $\phi\psi: Z(P_2)^3\to P_2$, sending $(t^a,t^b,t^c)$ to $w=t^a1^b2^c$. To show that this is definable, we will first show that the sets $N_1 = \{1^n|\ n\in\mathbb{N}\}$ and $N_2 = \{2^n|\ n\in\mathbb{N}\}$ are definable in $P_2$. 

First, consider the set elements $S_\ell$ that can be multiplied on the left to yield a central element $$
S_\ell = \{x\in P_2\ |\ \exists y: yx \in Z(P_2)\}
$$ and analogously, the set $S_r$ of elements hat can be multiplied on the right to yield a central element $$
S_r = \{x\in P_2\ |\ \exists y: xy \in Z(P_2)\}. 
$$
Since we know that 1 can be multiplied on the left to become central, and 2 can be multiplied on the right to become central, it is straightforward to see that $S_\ell = \{t^a1^b|\ a,b\in\mathbb{N}\}$ and $S_r = \{t^a2^b|\ a,b\in\mathbb{N}\}$. 

Notice that in $S_\ell$ (respectively $S_r$), each element is written as a product of a central element with an element of $N_1$ (respectively $N_2$). So if we insist that the central element in this product is always the identity, we will obtain the elements of $N_1$ (respectively $N_2$). In symbols, we can formalise this as the following conditions \begin{align*}
    x\in N_1 &\iff \left[x\in S_\ell \land \forall y \forall z: z\in Z(P_2) \land x = zy \implies z = \varepsilon\right]\\ 
    x\in N_2 &\iff \left[x\in S_r \land \forall y \forall z: z\in Z(P_2) \land x = zy \implies z = \varepsilon\right]
\end{align*}
so both sets $N_1$ and $N_2$ are definable. 

Now, given $t^b$ we can define $x = 1^b$ as the element of $N_1$ such that there is some $z\in N_2$ such that $zx = t^b$. Likewise, given $t^c$ we can define $y = 2^c$ as the element of $N_2$ such that there is some $z\in N_1$ such that $yz = t^c$. 

Then we take the image of $(t^a,t^b,t^c)\in Z(P_2)^3$ to be $\phi\psi(t^a,t^b,t^c) = t^axy$, with $x,\ y$ calculated as above. This is clearly definable, which completes the proof that this is in fact a bi-interpretation. 

\subsection{Submonoids generated by columns}

Notice that in the above section we showed that the submonoids $t^*, 1^*$, and $2^*$ generated by each of the columns are definable in $P_2$.  We will now show that this is a general fact in any plactic monoid, which will be useful for constructing a bi-interpretation.
\begin{thrm}\label{columnsubmonoids}
    For any plactic monoid $P_n$ with column generating set $\mathcal{C}$, for each $c_i\in\mathcal{C}$, the submonoid $c_i^* = \{c_i^n\ |\ n\in\mathbb{N}\}$ is definable in $P_n$.
\end{thrm}
\begin{proof}
    We will proceed by induction, with base case $P_2$ from the previous section. 

    As before, consider the set $S_\ell$ of all elements that can be multiplied on the left to yield a central element \begin{align*}
S_\ell &= \{x\in P_n\ |\ \exists y: yx \in Z(P_n)\}.
\end{align*}
Note that the centre of any monoid is definable, so $S_\ell$ is a definable set. We claim that any element of $S_\ell$ will have all its columns in the form
\ytableausetup{centertableaux}
\begin{align*}
    u_i = \begin{ytableau}
        i\\ \scriptstyle i-1\\ \vdots \\ 1
    \end{ytableau}
\end{align*}
\ytableausetup{aligntableaux = bottom}

i.e. a general $X\in S_\ell$ will be of the form $u_n^{x_n}\dots u_1^{x_1}$. Indeed, for each such $X$ we have a $Y = f_{n}^{x_{n-1}}f_{n-1}^{x_{n-2}}\dots f_2^{x_1}$, where 
\ytableausetup{centertableaux}
\begin{align*}
    f_i = \begin{ytableau}
        n\\ \scriptstyle n-1\\ \vdots \\ i
    \end{ytableau}
\end{align*}
\ytableausetup{aligntableaux = bottom}

and $YX$ is central, so $X$ is a member of $S_\ell$. 

To show that there are no other elements in $S_\ell$,  consider the structure of Schensted's algorithm. Given two elements $u, v \in P_n$, for each $x\in A$ that appears in the row reading of $v$, the corresponding $x$ in the row reading of $uv$ will either be on the same row as it started in $v$, or on a lower row. As an example, in $P_3$, consider the product of $u = 3\ 223\ 1122$ and $v = 3\ 22\ 1123$, here written in row reading. Then after running Schensted's algorithm with colour coding, we can see that

$$\ytableaushort[*(SeaGreen)]{3,223,1122} \times \ytableaushort[*(Thistle)]{3,22,1123} = \begin{ytableau}
    *(SeaGreen)3&*(SeaGreen)3&*(Thistle)3\\ *(SeaGreen)2&*(SeaGreen)2&*(SeaGreen)2&*(SeaGreen)2\\ *(SeaGreen)1&*(SeaGreen)1&*(Thistle)1&*(Thistle)1&*(Thistle)2&*(Thistle)2&*(Thistle)2&*(Thistle)3
\end{ytableau}. $$
This will be true in general because when you run Schensted's algorithm on a string in $A^*$, a letter may only be bumped by other letters further to the right in the string. So any $x$ that started in $v$ will only be moved by other letters of $v$, even in the product $uv$. But since each letter must bump the leftmost letter in the row, it might happen that $x$ will not be bumped as many times in the product as it was in $v$, in which case it might end up on a lower row. Most importantly, any $x$ that started on a given row of $v$ cannot be in a higher row of $uv$ than the row it started in.

Note that in a central element (a power of $c_1$), any $x\in A$ must appear in the row that is $x$-th from the bottom\footnote{i.e. all 1's appear on the bottom row, all 2's on the second from bottom row, and so on.}. The columns of the form $u_i$ are all the columns in $\mathcal{C}$ where each $x$ appears only on the $x$-th row from the bottom. 

Now, suppose $w\in P_n$ is a tableau with at least one of its columns not in the form of some $u_i$. Then there will be some letter $x$ in the tableau of $w$ which is lower than the $x$-th row from the bottom. But because multiplication on the left cannot move letters of $w$ to a higher row, for any $Y$ we must have an $x$ in $Yw$ in a row lower than $x$-th from the bottom. Hence $Yw$ is  never central, so $w$ is not a member of $S_\ell$. 

Next, notice that any $X\in S_\ell$ can be written as the product of $\Tilde{X} = u_n^{x_n}\dots u_2^{x_2}$ and $u_1^{x_1}$. This is useful, because $\Tilde{X}$ commutes with the generator 2, while $u_1^{x_1}$ does not. Furthermore, the column $u_1$ corresponds to the generator 1. So, similar to the case of $P_2$, we may define the submonoid $1^*$ as the set satisfying the formula $$x \in S_\ell\land \forall y \forall z: z2 = 2z \land x = zy \implies z = \varepsilon.$$
Since $\Tilde{X}$ commutes with 2, the formula above will insist that $\Tilde{X} = \varepsilon$, leaving only the elements of $S_\ell$ of the form $u_1^x$. Therefore, this is precisely the set $1^* = \{1^n\ |\ n\in\mathbb{N}\}$, as in the $P_2$ case. 

Using this, we can define the set satisfying the formula 
$$\forall x \forall y \forall z: w = xyz \land (y \in 1^*) \implies y = \varepsilon.$$
This formula disallows any instance of the generator 1, so defines the subset of $P_n$ generated by $\{2,\dots,n\}$. This is a submonoid isomorphic to $P_{n-1}$. 

Since it is definable in $P_n$, any set definable in this submonoid is also definable in $P_n$. By our induction hypothesis, any column generated submonoid of $P_{n-1}$ is definable in $P_{n-1}$. Therefore, by this hypothesis we get that $x^*$ is definable in $P_n$ for any $x\in A = \{1,\dots,n\}$. 

Now, we know that each column $c_i\in\mathcal{C}$ will correspond to a nonempty subset of $A$. Of these, every column except $c_1$ will have at least one letter $x\in A$ omitted. But since $x^*$ is definable in $P_n$ for each $x$, we can define a submonoid isomorphic to $P_{n-1}$ generated by $A\setminus\{x\}$, using a similar formula to the one above: $$\forall x \forall y \forall z: w = xyz \land y \in x^* \implies y = \varepsilon.$$

Each $c_i\in\mathcal{C}$ except $c_1$ will be an element of at least one of these definable submonoids. Then, using our induction hypothesis, it follows that $c_i^*$ is definable in $P_n$ for each $c_i\in\mathcal{C}$ except $c_1$. But $c_1^*$ is the centre of $P_n$, which is also definable, thus completing the proof.\end{proof}

\subsection{A bi-interpretation for plactic monoids}

\begin{thrm}\label{thrm:bi-interpretPnPres}
    $P_n$ and Presburger Arithmetic are bi-interpretable for each $n\in\mathbb{N}$.
\end{thrm}

\begin{proof}
We start with the map $\phi:S\to P_n$ from section \ref{Pngeneralcase}, which by Theorem \ref{thrmPninterpretable} is an interpretation, and the interpretation $\psi:Z(P_n)\to\mathbb{N}$. 

Taking $T\subset S$ corresponding to the preimage of $Z(P_n)$, we have that $$T = \{(x,0,\dots,0)\ |\ x\in\mathbb{N}\}.$$ Thus $\psi\phi:T\to\mathbb{N}$ is the obvious bijection sending $(n,\dots,0)$ to $n$, and clearly an isomorphism in the language of Presburger arithmetic. 

On the other hand, considering the reverse composition we have $\phi\psi: V\to P_n$, where $V\subset Z(P_n)^k$ is the subset defined through the incompatibility conditions on columns, which simply insist that certain entries be identity. This map $\phi\psi$ will identify a tuple $\underline{a} = (c_1^{a_1},c_1^{a_2},\dots,c_1^{a_k})$ with an element $c_1^{a_1}c_2^{a_2}\dots c_k^{a_k}\in P_n$. 

By the argument in lemma \ref{sequencemultn}, this is equivalent to identifying $\underline{a}$ with an element $\alpha_1^{a_{\beta_1}}\alpha_2^{a_{\beta_2}}\dots \alpha_t^{a_{\beta_t}}$. Since the sequences $\alpha$ and $\beta$ are fixed, this means that as long as we can identify $c_1^{a}$ with $x^a$ for each $x\in A$, we can define the element  $\alpha_1^{a_{\beta_1}}\alpha_2^{a_{\beta_2}}\dots \alpha_t^{a_{\beta_t}}$ in terms of $\underline{a}$. Namely, this will show that the map $\phi\psi$ is definable in the language of $P_n$, which will complete the proof that we have built a bi-interpretation.

Recall the columns of the form $u_i$ and $f_i$ from the proof of theorem \ref{columnsubmonoids}. Note that for each $x\in A$ we have that $$f_{x+1}xu_{x-1} = c_1.$$

Furthermore, by theorem \ref{columnsubmonoids}, we know that $x^*,\ u_{x-1}^*,$ and $f_{x+1}^*$ are definable subsets of $P_n$. So, if we consider $w\in x^*,\ y\in f_{x+1}^*$, and $z\in u_{x-1}^*,$ we will have that $ywz = c_1^a$ precisely when $w = x^a$. So, the formula $$w \in x^* \land \exists y \exists z: y\in f_{x+1}^*\land z\in u_{x-1}^* \land ywz = c_1^a$$ defines $x^a$ in terms of $c_1^a$. Hence the image $\alpha_1^{a_{\beta_1}}\alpha_2^{a_{\beta_2}}\dots \alpha_t^{a_{\beta_t}}$ of $\underline{a}$ under $\phi\psi$ is definable in the language of $P_n$.
\end{proof}

Due to the transitivity of interpretations, we also have the following nice corollary.
\begin{cor}
    For any $m,n\in\mathbb{N}$, $P_n$ and $P_m$ are bi-interpretable.
\end{cor}

\section{The Diophantine problem in the infinite case}\label{sectionInfinite}

We note that the above interpretations were constructed algorithmically in a uniform way. That is to say, there will exist an effective procedure which, given $n$, will construct the interpreting map for $P_n$. The procedure runs as follows:
\begin{enumerate}
\item Generate the interpretation for $P_{n-1}$
    \item Given $n$, generate the power set of $\{1,\dots,n\}$ except the empty set.
    \item Enumerate this set by the order $\sqsubseteq$ on columns. Since each column is a decreasing sequence of elements in $\{1,\dots,n\}$, each column corresponds to a unique element of the power set. 
    \item Run Schensted's algorithm on each pair of columns. If the output of running Schensted's algorithm on $c_ic_j$ is not $c_ic_j$, then $(i,j)$ is an incompatible pair. 
    \item Generate the formula defining $S$ by conjuncting with $x_i = 0 \lor x_j = 0$ for each incompatible pair discovered in step 3.
    \item Generate the formula defining equality in terms of the formula defining $S$.
    \item Generate the formula for $\mu_x$ in terms of the interpretation for $P_{n-1}$
    \item Generate the sequences $\alpha$ and $\beta$ from lemma \ref{sequencemultn}. Steps 7 and 8 yield a formula defining multiplication. 
\end{enumerate}
Step 1 will repeat recursively until we reach $P_2$, which can be written explicitly as in section \ref{P2basecase}.

\subsection{The plactic monoid of all tableaux}
We consider $A = \mathbb{N}\setminus\{0\}$ with $K_\mathbb{N}$ the set of Knuth relations for all triples $(x,y,z)~\in~\mathbb{N}^3$. Then the associated plactic monoid $P(\mathbb{N})$ is the monoid of \emph{all} semistandard Young tableaux. Despite the work in this paper, the question of deciding the theory of $P(\mathbb{N})$ remains open. However, we present an algorithm, by uniformity, for deciding the Diophantine problem for $P(\mathbb{N})$. 

\begin{lemma}\label{PNhomlemma}
    For any $n\in\mathbb{N}$ the map $\phi:P(\mathbb{N})\to P_n$, defined on generators as $\phi(k) = k$ if $k\leq n$ and $\phi(k) = \varepsilon$ if $k>n$ and extended to words in the natural way, is a homomorphism.
\end{lemma}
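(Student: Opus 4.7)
The plan is to realize $\phi$ as induced from a monoid homomorphism at the free-monoid level, and then check that it respects the defining Knuth congruence. Specifically, define $\hat\phi\colon \mathbb{N}^*\to [n]^*$ on letters by $\hat\phi(k)=k$ for $k\leq n$ and $\hat\phi(k)=\varepsilon$ for $k>n$, extended to a homomorphism of free monoids. Since $P(\mathbb{N})=\mathbb{N}^*/\!\sim_{K_\mathbb{N}}$ and $P_n=[n]^*/\!\sim_K$, showing that $\phi$ is well defined reduces to verifying that $\hat\phi(u)\sim_K \hat\phi(v)$ in $[n]^*$ for every generating Knuth relation $u=v$ in $K_\mathbb{N}$.

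The Knuth relations come in two shapes, $xzy=zxy$ with $x\leq y<z$ and $yxz=yzx$ with $x<y\leq z$, and both sides involve exactly the multiset of letters $\{x,y,z\}$. I would run a case analysis on which of $x,y,z$ lie in $[n]$, using the ordering constraints to rule out impossible combinations: under either relation, the letters that do lie in $[n]$ form a downward-closed subset of $\{x,y,z\}$ (for instance, $y>n$ together with $y<z$ forces $z>n$, so the case ``$y$ deleted but $z$ kept'' never arises). This leaves only the following possibilities, which I would check one by one:
\begin{itemize}
    \item $z\leq n$: then all of $x,y,z$ lie in $[n]$ and $\hat\phi$ fixes the relation, which is a defining Knuth relation of $P_n$.
    \item $z>n$ but $y\leq n$: $\hat\phi$ deletes only the copies of $z$ from both sides, leaving the common word $xy$ (respectively $yx$).
    \item $y,z>n$ but $x\leq n$: both sides collapse to the single-letter word $x$.
    \item $x,y,z>n$: both sides collapse to $\varepsilon$.
\end{itemize}

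In every case the two images either coincide as elements of $[n]^*$ or differ by one instance of a Knuth relation in $P_n$, so they are equal in $P_n$. Hence $\hat\phi$ descends to a well-defined monoid homomorphism $\phi\colon P(\mathbb{N})\to P_n$ with the prescribed action on generators. There is no real obstacle here beyond the bookkeeping: the ordering hypotheses in the Knuth relations are precisely what make the ``mixed'' deletion cases collapse to literal word equalities, and the only case where a nontrivial plactic identity is needed in the target is the one where $\hat\phi$ acts trivially on all three letters.
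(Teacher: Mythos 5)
Your proposal is correct and follows essentially the same route as the paper: reduce to checking that each generating Knuth relation in $K_\mathbb{N}$ maps to a relation holding in $P_n$, with the case $z\leq n$ giving a Knuth relation of $P_n$ and the remaining cases giving literal word equalities. The paper merely compresses your last three cases into the single observation that when $z>n$ both sides map to the same word $\phi(x)\phi(y)$ (respectively $\phi(y)\phi(x)$), regardless of whether $x$ or $y$ is also deleted.
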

\begin{proof}
    Considered as a map from $\mathbb{N}^*$ to $\{1,\dots,n\}^*$, $\phi$ is clearly a homomorphism. It only remains to show that $\phi$ is well defined as a map from $P(\mathbb{N})$ to $P_n$. We will show this by proving that each Knuth relation in $K_\mathbb{N}$ will map to a relation that holds in $P_n$. 

    Suppose $u = xzy$ and $v = zxy$, for $x\leq y < z$. If $z\leq n$ then $\phi(u) = u,\ \phi(v) = v,$ and $u=v$ in $P_n$ so there is nothing to prove. If $z>n$ then $\phi(u)=\phi(x)\phi(y)=\phi(v)$, as $\phi(z) = \varepsilon$. Thus $\phi(u) = \phi(v)$ will always hold in $P_n$. An analogous argument shows $\phi(u) = \phi(v)$ for $u = yxz$ and $v=yzx$ with $x<y\leq z$. 
\end{proof}
\begin{thrm}
    The Diophantine problem for $P(\mathbb{N})$ is decidable.
\end{thrm}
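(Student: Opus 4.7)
The plan is to reduce the Diophantine problem for $P(\mathbb{N})$ to the Diophantine problem for $P_N$ for an explicitly computable rank $N$, and then invoke the uniform construction of interpretations from earlier in this section. Given a finite system of equations over $P(\mathbb{N})$ as input, only finitely many constants appear, so I can read off an integer $N$ such that every constant occurring in the system lies in $[N]$.

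The central claim is that the system has a solution in $P(\mathbb{N})$ if and only if it has a solution in $P_N$. For the forward direction, I would apply the homomorphism $P(\mathbb{N})\to P_N$ of Lemma~\ref{PNhomlemma}: since this map fixes each of the letters $1,\dots,N$, applying it component-wise to a solution yields an assignment over $P_N$ that still satisfies every equation (the left and right hand sides have their constants preserved and the variables replaced by their images). For the backward direction, I would use that the alphabet inclusion $[N]\hookrightarrow\mathbb{N}\setminus\{0\}$ induces an injective monoid homomorphism $P_N\hookrightarrow P(\mathbb{N})$; injectivity follows from the fact that two words over $[N]$ are Knuth-equivalent if and only if they have the same Schensted tableau, which is the same whether the ambient alphabet is $[N]$ or $\mathbb{N}\setminus\{0\}$. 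Thus any solution to the system in $P_N$ transports to a solution in $P(\mathbb{N})$.

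With this equivalence in hand, the algorithm is immediate: from the input, compute $N$; run the uniform procedure described at the start of this section to produce the interpretation of $P_N$ in Presburger arithmetic; translate the Diophantine system, viewed as a positive existential sentence in the language of $P_N$ with constants, into an existential sentence of Presburger arithmetic via this interpretation; and decide the resulting sentence using Lemma~\ref{Presburger}.

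The main delicate point is the backward direction of the reduction, i.e.\ that $P_N$ genuinely embeds as a submonoid of $P(\mathbb{N})$, which hinges on the observation that the output of Schensted's algorithm on a word depends only on the word and the induced linear order on the letters that appear, not on the ambient alphabet. Once this is granted, everything else is a direct application of the uniformity of the construction already developed in the paper.
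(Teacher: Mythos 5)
Your proposal is correct and follows essentially the same route as the paper: compute the maximal letter $N$ occurring among the constants, use the retraction homomorphism of Lemma~\ref{PNhomlemma} to push any solution down to $P_N$, and then decide the resulting positive existential sentence via the uniform interpretation of $P_N$ in Presburger arithmetic. You are in fact slightly more careful than the paper on the backward direction, explicitly justifying that $P_N$ embeds in $P(\mathbb{N})$ so that solutions transport back, a point the paper leaves implicit when asserting the ``if and only if.''
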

\begin{proof}
Suppose we are given some equation $u_1X_1\dots X_nu_{n+1} = v_1Y_1\dots  Y_mv_{m+1}$. Denote this equation by $\varphi$. Define the \emph{support} of an element $u\in P(\mathbb{N})$ to be the set of all numbers appearing in $u$. Further, define the support of $\varphi$ to be all letters appearing in the supports of $u_i$ and $v_i$
$$supp(\varphi) = \bigcup_{i\leq n+1} supp(u_i)\cup\bigcup_{j\leq m+1} supp (v_j)$$
Let $k = \max(supp(\varphi))$. Then by the above proposition there exists a homomorphism $\phi:P(\mathbb{N})\to P_k$. Since each $u_i$ and $v_j$ is an element of $P_k$, we get that $\phi(u_i) = u_i$ and $\phi(v_j) = v_j$. 

Now, suppose $\varphi$ has a solution $(x_1,\dots,x_n,y_1,\dots,y_m)\in P(\mathbb{N})^{m+n}$. 

Then $(\phi(x_1),\dots \phi(x_n),\phi(y_1),\dots,\phi(y_m))\in P_k^{n+m}$ will also be a solution to $\varphi$, since $\phi$ is a homomorphism. Thus $\varphi$ has a solution in $P(\mathbb{N})$ if and only if it has a solution in $P_k$. 

Now, since there is a uniform algorithm for deciding first order sentences in $P_k$ for any $k$, we obtain the following procedure for solving Diophantine problems in $P(\mathbb{N})$:

\begin{enumerate}
    \item Given $\varphi$ as input, calculate $k = \max(supp(\varphi))$.
    \item Generate the interpretation of $P_k$ into Presburger arithmetic
    \item Interpret the sentence $$\exists X_1\dots \exists X_n\exists Y_1\dots\exists Y_m: u_1X_1\dots X_nu_{n+1} = v_1Y_1\dots  Y_mv_{m+1}$$
    in Presburger arithmetic using the interpretation of $P_k$, and check whether it holds.
\end{enumerate}\end{proof}

\subsection{A plactic monoid on integers}
We need not restrict ourselves to plactic monoids generated by $\mathbb{N}$.

Let's consider instead tableaux with labels taken from $\mathbb{Z}$. By the total order on $\mathbb{Z}$ we obtain a set $K_\mathbb{Z}$ of Knuth relations on triples $(x,y,z)\in\mathbb{Z}^3$. Define the plactic monoid on integers to be $P(\mathbb{Z}) = \langle\mathbb{Z}\ |\ K_\mathbb{Z}\rangle$. This is an infintely generated plactic monoid, but note that $P(\mathbb{N})$ and $P(\mathbb{Z})$ are not isomorphic.

Indeed, suppose $\psi: P(\mathbb{Z})\to P(\mathbb{N})$ were an isomorphism. Then for some $y\in P(\mathbb{Z})$ we have $\psi(y) = 1$. Since 1 is irreducible, we must have $y\in\mathbb{Z}$. Consider $x<y<z,\ x,y,z\in\mathbb{Z}$. Then by irreducibility, $\psi(x),\psi(z)\in\mathbb{N}$. Thus we have some $a,b\in\mathbb{N}$ such that $1ab = 1ba$. Such an equality cannot hold in $P(\mathbb{N})$.

Given $\varphi$ a Diophantine equation in $P(\mathbb{Z})$, we will have $supp(\varphi)$ a finite totally ordered set. This set will have some smallest element $a\in\mathbb{Z}$ and some largest element $b\in\mathbb{Z}$. Then the interval $[a,b]\subset\mathbb{Z}$ has size $k = b-a+1$, and  we can define an order preserving injective map from $supp(\varphi)$ to $\{1,\dots,k\}$. We will extend this map to a homomorphism.

\begin{lemma}
    Let $\{z_1<z_2<\dots < z_n\}$ be a finite set of integers with their standard order. Then the map $\phi:P(\mathbb{Z})\to P_k$, with $k = z_n-z_1+1$ defined on generators by $$\phi(z) = \begin{cases}
        \varepsilon,\ z<z_1\\
       z-z_1+1,\ z \in [z_1,z_n]\\
     \varepsilon,\  z> z_n
    \end{cases}$$
    and extended to words in the natural way, is a homomorphism
\end{lemma}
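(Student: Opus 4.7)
The plan is to mimic the argument of Lemma \ref{PNhomlemma}, with the added wrinkle that letters may now fall \emph{below} $z_1$ as well as above $z_n$. Extended letter-by-letter, $\phi$ is manifestly a homomorphism of free monoids $\mathbb{Z}^*\to[k]^*$. What needs to be shown is that $\phi$ respects the congruence generated by $K_\mathbb{Z}$, for which it suffices to check that every Knuth relation in $K_\mathbb{Z}$ has both sides sent to equal elements of $P_k$.

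Fix a Knuth relation; say it is of the form $xzy=zxy$ with $x\leq y<z$ in $\mathbb{Z}$ (the other family $yxz=yzx$ with $x<y\leq z$ is handled identically). Set $I=[z_1,z_n]\subset\mathbb{Z}$, and do a case split on which of $x,y,z$ lie in $I$. The crucial observation is that the two-sided range condition is compatible with the ordering $x\leq y<z$: for instance, if $z\notin I$ and $z<z_1$, then also $x,y<z_1$, so all three letters map to $\varepsilon$; if $y\notin I$ and $y>z_n$, then $z>z_n$ too, so at most $x$ lies in $I$. Running through the cases, whenever any of $x,y,z$ maps to $\varepsilon$, both $\phi(xzy)$ and $\phi(zxy)$ reduce to the same concatenation of the surviving images (since $x$ and $z$ are the letters that swap, and both appear once on each side; and $y$ occupies the same terminal position).

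In the remaining case all three of $x,y,z$ lie in $I$. Here $\phi$ restricted to $I$ is the shift $w\mapsto w-z_1+1$, which is strictly order-preserving into $[k]$. Consequently $\phi(x)\leq\phi(y)<\phi(z)$ in $[k]$, so $\bigl(\phi(x)\phi(z)\phi(y),\,\phi(z)\phi(x)\phi(y)\bigr)$ is precisely a Knuth relation of $P_k$, giving $\phi(xzy)=\phi(zxy)$ in $P_k$ by the defining presentation. The symmetric argument handles $yxz=yzx$.

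The proof is essentially bookkeeping, so I do not expect a real obstacle; the only thing to be careful about is to use the ordering hypothesis on the Knuth triple to rule out the case splits that would otherwise look threatening (e.g.\ ``$x\in I$ but $y,z\notin I$ on different sides of $I$''), and to notice that when a letter collapses to $\varepsilon$ the two sides of a Knuth relation remain literally equal as words in $[k]^*$, not merely equal in $P_k$.
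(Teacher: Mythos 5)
Your proposal is correct and follows essentially the same route as the paper's own proof: a case split on which of the letters in a Knuth triple are sent to $\varepsilon$, using the ordering $x\leq y<z$ to rule out the one problematic configuration (only $y$ collapsing), and order-preservation of the shift on $[z_1,z_n]$ when no letter collapses. You are in fact slightly more explicit than the paper, which asserts without comment that a lone collapsed letter must be $x$ or $z$.
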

    \begin{proof}
As in lemma \ref{PNhomlemma}, consider $u = xzy$ and $v = zxy$, for $x\leq y z$. If more than one letter in $\{x,y,z\}$ is mapped to $\varepsilon$, there is nothing to prove. Likewise if no letters are mapped to $\varepsilon$. If only one letter is mapped to $\varepsilon$, then this is either $x$, yielding $\phi(u) = \phi(z)\phi(y) = \phi(v)$, or this letter is $z$, yielding $\phi(u) = \phi(x)\phi(y) = \phi(v)$. An analogous argument holds for all other Knuth relations.
   \end{proof}

Thus, as in the case above, any Diophantine equation $\varphi$ is solvable in $P(\mathbb{Z})$ if and only if it has a solution in a fixed finite rank plactic monoid. Therefore, by uniformity of the above algorithm, we obtain the following corollary
\begin{cor}
    The Diophantine problem for $P(\mathbb{Z})$ is decidable.
\end{cor}

\subsection{Two open questions}
\begin{enumerate}
    \item Is the first order theory of $P(\mathbb{N})$ decidable? It is known that this monoid satisfies no identities \cite{johnson2021tropical}, and the above proof shows it has decidable Diophantine problem. Can this be extended to the whole theory? What about in the $P(\mathbb{Z})$ case?
    \item Do infinite rank plactic monoids defined on other generating sets have decidable Diophantine problem? For example, does $P(\mathbb{Q}) = \langle\mathbb{Q}\ |\ K_\mathbb{Q}\rangle$ have decidable Diophantine problem? What about $P(L)$ for an arbitrary recursive total order? More generally, do such monoids have decidable theory as well?
\end{enumerate}
\section*{Acknowledgements}
This research was conducted during my master's study at the University of East Anglia, and will form part of my thesis. I thank Robert Gray for his support and feedback as supervisor, and Lorna Gregory for her feedback and useful discussion of the model theoretic background. I also thank the PhD student community at the UEA for their support. 

\newpage
\printbibliography
\end{document}